\theoremstyle{plain}
\newtheorem{thm}[subsection]{Theorem}
\newtheorem{prop}[subsection]{Proposition}
\newtheorem{lem}[subsection]{Lemma}
\newcommand{\comment}[1]{}
\theoremstyle{definition}
\newtheorem{defi}[subsection]{Definition}
\theoremstyle{remark}
\newtheorem{rem}[subsection]{Remark}
\theoremstyle{definition}
\newtheorem{exa}[subsection]{Example}
\newcommand{\Alg}{\mathrm{Alg}}
\newcommand{\Mod}{\mathrm{Mod}}
\renewcommand{\S}{\mathbb{S}}
\newcommand{\Z}{\mathbb{Z}}
\newcommand{\N}{\mathbb{N}}
\newcommand{\Hom}{\mathrm{Hom}}
\newcommand{\Map}{\mathrm{Map}}
\newcommand{\colim}{\operatornamewithlimits{colim}}
\newcommand{\Zo}{{\mathbb{Z}_{\geq 0}}}
\newcommand{\sd}{\mathrm{sd}}
\newcommand{\Ex}{\mathit{Ex}}
\newcommand{\ind}{\mathrm{ind}}
\newcommand{\op}{\mathrm{op}}
\newcommand{\mult}{\mathbf{m}}
\newcommand{\id}{\mathrm{id}}
\newcommand{\fS}{\mathfrak{S}}
\newcommand{\ev}{\mathrm{ev}}
\newcommand{\pr}{\mathrm{pr}}
\newcommand{\inc}{\mathrm{incl}}
\newcommand{\tilT}{\widetilde{T}}
\newcommand{\scrK}{\mathscr{K}}
\newcommand{\Born}{\mathrm{Born}}
\newcommand{\SigmaHo}{{\Sigma\mathrm{Ho}}}
\newcommand{\fC}{\mathfrak{C}}
\newcommand{\scrU}{\mathscr{U}}
\newcommand{\scrE}{\mathscr{E}}
\renewcommand{\bul}{\bullet}
\newcommand{\Minf}{{M_\infty}}
\newcommand{\lar}[1]{\vec{#1}}
\newcommand{\tria}{\mathscr{T}}
\newcommand{\scrP}{\mathscr{P}}
\newcommand{\sk}{\mathrm{sk}}
\newcommand{\Algl}{{\mathrm{Alg}_\ell}}
\newcommand{\Set}{\mathrm{Set}}
\newcommand{\DRF}{{D(\mathfrak{F}_{\mathrm{spl}})}}
\begin{document}

\title{The homotopy groups of the simplicial mapping space between algebras}
\author{Emanuel Rodr\'\i guez Cirone}
\email{ercirone@dm.uba.ar}
\address{Dep. Matem\'atica-IMAS, FCEyN-UBA\\ Ciudad Universitaria Pab 1\\
	1428 Buenos Aires\\ Argentina}

\begin{abstract}
Let $\ell$ be a commutative ring with unit. To every pair of $\ell$-algebras $A$ and $B$ one can associate a simplicial set $\Hom(A,B^\Delta)$ so that $\pi_0\Hom(A,B^\Delta)$ equals the set of polynomial homotopy classes of morphisms from $A$ to $B$. We prove that $\pi_n\Hom(A,B^\Delta)$ is the set of homotopy classes of morphisms from $A$ to $B^{\fS_n}_\bul$, where $B^{\fS_n}_\bul$ is the ind-algebra of polynomials on the $n$-dimensional cube with coefficients in $B$ vanishing at the boundary of the cube. This is a generalization to arbitrary dimensions of a theorem of Corti\~nas-Thom, which addresses the cases $n\leq 1$. As an application we give a simplified proof of a theorem of Garkusha that computes the homotopy groups of his matrix-unstable  algebraic $KK$-theory space in terms of polynomial homotopy classes of morphisms.
\end{abstract}

\subjclass[2010]{55Q52, 19K35.}
\keywords{Homotopy theory of algebras, bivariant algebraic $K$-theory.}

\maketitle

\section{Introduction}

Algebraic $kk$-theory was constructed by Corti\~nas-Thom in \cite{cortho}, as a completely algebraic analogue of Kasparov's $KK$-theory. It is defined on the category $\Algl$ of associative, not necessarily unital algebras over a fixed unital commutative ring $\ell$. It consists of a triangulated category $kk$ endowed with a functor $j:\Algl\to kk$ that satisfies the following properties:
\begin{enumerate}
	\item[(H)]\label{item1} \emph{Homotopy invariance.} The functor $j$ is polynomial homotopy invariant.
	\item[(E)]\label{item2} \emph{Excision.} Every short exact sequence of $\ell$-algebras that splits as a sequence of $\ell$-modules gives rise to a distinguished triangle upon applying $j$.
	\item[(M)]\label{item3} \emph{Matrix stability.} For any $\ell$-algebra $A$ we have $j(\Minf A)\cong j(A)$, where $\Minf A$ denotes the algebra of finite matrices with coefficients in $A$ indexed by $\N\times\N$.
\end{enumerate}
This functor $j$ is moreover universal with the above properties: any other functor from $\Algl$ into a triangulated category satisfying (H), (E) and (M) factors uniquely trough $j$. Another important property of $kk$-theory is that it recovers Weibel's homotopy $K$-theory: $kk(\ell,A)\cong KH_0(A)$; see \cite{cortho}*{Theorem 8.2.1}.

As a technical tool for defining algebraic $kk$-theory, Corti\~nas-Thom introduced in \cite{cortho}*{Section 3} a simplicial enrichment of $\Algl$. They associated a simplicial mapping space $\Hom_\Algl(A,B^\Delta)$ to any pair of $\ell$-algebras $A$ and $B$, and they defined simplicial compositions
\[\circ:	\Hom_\Algl(B,C^\Delta)\times\Hom_\Algl(A,B^\Delta)\to\Hom_\Algl(A,C^\Delta)\]
that make $\Algl$ into a simplicial category in the sense of \cite{quillen}*{Section II.1}. The homotopy category of this simplicial category is Gersten's homotopy category of algebras \cite{gersten}*{Section 1}. This means that there is a natural bijection
\[\pi_0\Hom_\Algl(A,B^\Delta)\cong [A,B],\]
where the right hand side denotes the set of polynomial homotopy classes of morphisms from $A$ to $B$. In the same vein, Corti\~nas-Thom showed in \cite{cortho}*{Theorem 3.3.2} that
\[\pi_1\Hom_\Algl(A,B^\Delta)\cong [A,B^{\fS_1}],\]
where $B^{\fS_1}$ denotes the ind-algebra of polynomials on $S^1=\Delta^1/\partial\Delta^1$  with coefficients in $B$ that vanish at the basepoint. The main result of this paper is the following generalization of the latter to arbitrary dimensions.

\begin{thm}[Theorem \ref{thm:bij}]\label{thm:main}
For any pair of $\ell$-algebras $A$ and $B$ and any $n\geq 0$ there is a natural bijection
\begin{equation}\label{eq:main}\pi_n\Hom_\Algl(A,B^\Delta)\cong [A,B^{\fS_n}],\end{equation}
where $B^{\fS_n}$ is the ind-algebra of polynomials on the $n$-dimensional cube with coefficients in $B$ vanishing at the boundary of the cube.
\end{thm}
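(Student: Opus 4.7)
My plan is to proceed by induction on $n$. The base case $n=0$ is the tautology $\pi_0\Hom_\Algl(A,B^\Delta)=[A,B]=[A,B^{\fS_0}]$, and the case $n=1$ is precisely the Corti\~nas--Thom theorem \cite{cortho}*{Theorem 3.3.2} quoted in the introduction.

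The inductive step rests on two ingredients. The first is an iterated-loops identification of ind-algebras,
$$
(B^{\fS_1})^{\fS_{n-1}} \cong B^{\fS_n},
$$
obtained by factoring a polynomial on $I^n$ vanishing on $\partial I^n$ as a polynomial on $I^{n-1}$ (vanishing on $\partial I^{n-1}$) whose coefficients themselves vanish at the endpoints of the remaining interval. This identification lets me rewrite the right-hand side of \eqref{eq:main} as $[A,(B^{\fS_1})^{\fS_{n-1}}]$ and then apply the inductive hypothesis with $B^{\fS_1}$ in place of $B$, reducing the theorem to establishing a natural isomorphism
$$
\pi_n \Hom_\Algl(A, B^\Delta) \;\cong\; \pi_{n-1}\Hom_\Algl(A,(B^{\fS_1})^\Delta).
$$

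The second ingredient is a path-loop fibration at the level of ind-algebras. I would introduce the ind-algebra $PB$ of polynomials on $I$ with values in $B$ vanishing at one endpoint, so that there is a split short exact sequence of ind-algebras
$$
0 \to B^{\fS_1} \to PB \to B \to 0,
$$
and $PB$ is polynomially contractible via the rescaling $t\mapsto st$. Consequently $\Hom_\Algl(A,(PB)^\Delta)$ is simplicially contractible. The plan is to show that applying $\Hom_\Algl(A,(-)^\Delta)$ to the displayed sequence yields a homotopy fibration sequence of pointed simplicial sets whose total space is contractible; the long exact sequence in homotopy groups then produces the required degree shift by one.

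The principal obstacle is this last step: verifying that $\Hom_\Algl(A,(-)^\Delta)$ converts the split short exact sequence above into a fibration sequence of simplicial sets, or at least yields a long exact sequence on homotopy groups. Concretely, this requires a polynomial lifting property of Kan type: an algebra map $A\to B^{\Delta^n}$ that has been lifted on a horn to $(PB)^{\Delta^n}$ must admit an extension of the lift to the whole simplex. I expect this to need an explicit polynomial interpolation on $\Delta^n$, most likely after replacing $\Hom_\Algl(A,B^\Delta)$ by its $\Ex^\infty$ (or otherwise subdividing), together with a careful check that the ind-algebra colimit defining $B^{\fS_1}$ and $PB$ is compatible with the simplicial colimit. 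A secondary bookkeeping point is the treatment of basepoints: the simplicial category $\Algl$ is pointed by zero morphisms, and the reduction from free to based homotopy classes of maps into $B^{\fS_n}$ must be handled with care in the non-unital setting.
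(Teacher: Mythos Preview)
Your inductive strategy has two genuine gaps, both of which the paper's direct argument avoids.

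First, the identification $(B^{\fS_1})^{\fS_{n-1}}\cong B^{\fS_n}$ is not correct as an isomorphism of ind-algebras. The paper warns explicitly in Remark~\ref{rem:simpliberreta} that $(B^X)^Y\not\cong B^{X\times Y}$, and this already fails for $X=Y=\Delta^1$. Concretely, $\left(B^{\fS_1}_r\right)^{\fS_{n-1}}_s\cong B\otimes \Z^{(I,\partial I)}_r\otimes \Z^{(I^{n-1},\partial I^{n-1})}_s$, while $B^{\fS_n}_t\cong B\otimes \Z^{(I^n,\partial I^n)}_t$, and these rings are genuinely different: $\Z^{I^n}$ is a glued union of polynomial rings over the maximal simplices of $I^n$, not a single polynomial ring in $n$ variables. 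What the paper constructs in Section~\ref{sec:multi} is only a \emph{morphism} $\mu$ from the former to the latter; it is not an isomorphism, and the paper has to work (Examples after Lemma~\ref{lem:multi}) to show that $\mu$ becomes invertible even after applying a homotopy-invariant excisive functor. So your ``factoring a polynomial'' picture presupposes an identification that does not hold in this simplicial-algebra setting.

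Second, the fibration step you flag as the ``principal obstacle'' is indeed where the real content lies, and the paper does not establish anything like it. There is no reason to expect $\Hom_\Algl(A,(PB)^\Delta)\to\Hom_\Algl(A,B^\Delta)$ to be a Kan fibration, and $\Ex^\infty$ does not create fibrations. Even aiming only for a long exact sequence, you would need to identify the homotopy fiber with $\Hom_\Algl(A,(B^{\fS_1})^\Delta)$, which again runs into the failure of $(?)^X$ to commute with $(?)^Y$.

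The paper's route is entirely different and non-inductive. It writes $\pi_n$ as $\pi_0\Omega^n\Ex^\infty$ and computes the $0$- and $1$-simplices of $\Omega^n\Ex^\infty\Hom_\Algl(A,B^\Delta)$ directly as $\colim_r\Hom_\Algl(A,B^{\fS_n}_r)$ and $\colim_r\Hom_\Algl(A,\widetilde{B}^{\fS_n}_r)$. The comparison then reduces to showing that simplicial homotopy (a $1$-simplex in $\widetilde{B}^{\fS_n}_r$) and polynomial homotopy (a map into $(B^{\fS_n}_r)^{\sd^s I}$) agree. One direction is Garkusha's Hauptlemma; the other is Lemma~\ref{lem:haupt}, whose proof is a one-line application of the multiplication morphism $\mu^{(I^n,\partial I^n),(I,\emptyset)}$. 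Thus the multiplication morphisms are the key device, used not to produce an isomorphism of ind-algebras but to transport a polynomial homotopy into the correct simplicial shape.
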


To prove Theorem \ref{thm:main} one has to compare two different notions of homotopy for ind-algebra homomorphisms $A\to B^{\fS_n}$: simplicial homotopy on the left hand side of \eqref{eq:main} and polynomial homotopy on the right. Simplicial homotopy implies polynomial homotopy by \cite{garku}*{Hauptlemma (2)}. This key technical result of \cite{garku} ---of which Garkusha provides a beautiful constructive proof--- allows one to define a surjective function
\begin{equation}\label{eq:function}\pi_n\Hom_\Algl(A,B^\Delta)\to [A,B^{\fS_n}]\end{equation}
that turns out to be the desired bijection. We prove the injectivity of \eqref{eq:function} by showing that polynomial homotopy implies simplicial homotopy; this is done in Lemma \ref{lem:haupt}. This lemma follows immediately from the existence of the multiplication morphisms defined in section \ref{sec:multi}.

With methods different from those of Corti\~nas-Thom, Garkusha gave in \cite{garku} an alternative construction of $kk$-theory and moreover defined other universal bivariant homology theories of algebras. The latter are functors from $\Algl$ into some triangulated category that share properties (H) and (E) with algebraic $kk$-theory but satisfy different matrix-stability conditions. Garkusha showed in \cite{garku} that his bivariant homology theories are representable by spectra, and the simplicial mapping spaces between algebras are his main building blocks for these spectra. The idea of the isomorphism \eqref{eq:function} was already present in the proof of \cite{garku}*{Comparison Theorem A}, where he computed the homotopy groups of the matrix-unstable algebraic $KK$-theory space in terms of polynomial homotopy classes of morphisms. However, Garkusha used \cite{garku}*{Hauptlemma (3)} as a substitute for Lemma \ref{lem:haupt} when proving injectivity. This makes his proof rely on nontrivial techniques from homotopy theory such as the construction of a motivic-like model category of simplicial functors on $\Algl$. As an application of Theorem \ref{thm:main}, we give a simplified proof of \cite{garku}*{Comparison Theorem A} that uses no more homotopy theory than the definition of the homotopy groups of a simplicial set.

The rest of this paper is organized as follows. In section \ref{sec:conv} we fix notation and we recall the definition of polynomial homotopy and the details of the simplicial enrichment of $\Algl$. In section \ref{sec:main} we prove Lemma \ref{lem:haupt} and Theorem \ref{thm:main}. In section \ref{sec:spec} we apply Theorem \ref{thm:main} to give a simplified proof of \cite{garku}*{Comparison Theorem A}.

\section{Preliminaries}\label{sec:conv} Throughout this text, $\ell$ is a commutative ring with unit. We only consider associative, not necessarily unital $\ell$-algebras and we write $\Algl$ for the category of $\ell$-algebras and $\ell$-algebra homomorphisms. Simplicial $\ell$-algebras can be considered as simplicial sets using the forgetful functor $\Algl\to \Set$; this is usually done without further mention. The symbol $\otimes$ indicates tensor product over $\Z$.

\subsection{Categories of directed diagrams}\label{sec:directed}

Let $\fC$ be a category. A \emph{directed diagram} in $\fC$ is a functor $X:I\to\fC$, where $I$ is a filtering partially ordered set. We often write $(X,I)$ or $X_\bul$ for such a functor. We shall consider different categories whose objects are directed diagrams:

\subsubsection{Fixing the filtering poset} Let $I$ be a filtering poset. We will write $\fC^I$ for the category whose objects are the functors $X:I\to\fC$ and whose morphisms are the natural transformations.

\subsubsection{Varying the filtering poset} We will write $\lar{\fC}$ for the category whose objects are the directed diagrams in $\fC$ and whose morphisms are defined as follows: Let $(X,I)$ and $(Y,J)$ be two directed diagrams. A morphism from $(X,I)$ to $(Y,J)$ consists of a pair $(f,\theta)$ where $\theta:I\to J$ is a functor and $f:X\to Y\circ\theta$ is a natural transformation.

For a fixed filtering poset $I$, there is a faithful functor $a:\fC^I\to \lar{\fC}$ that acts as the identity on objects and that sends a natural transformation $f$ to the morphism $(f,\id_I)$.

\subsubsection{The category of ind-objects} The category $\fC^\ind$ of ind-objects of $\fC$ is defined as follows: The objects of $\fC^\ind$ are the directed diagrams in $\fC$. The hom-sets are defined by:
\[\Hom_{\fC^\ind}\left( (X,I),(Y,J)\right):=\lim_{i\in I}\colim_{j\in J}\Hom_\fC(X_i,Y_j)\]

There is a functor $\lar{\fC}\to\fC^\ind$ that acts as the identity on objects and that sends a morphism $(f,\theta):(X,I)\to(Y,J)$ to the morphism:
\[\left(f_i:X_i\to Y_{\theta(i)}\right)_{i\in I}\in \lim_{i\in I}\colim_{j\in J}\Hom_\fC(X_i,Y_j)\]

\subsection{Simplicial sets}\label{sec:simpli} The category of simplicial sets is denoted by $\S$; see \cite{hovey}*{Chapter 3}. Let $\Map(?,??)$ be the internal-hom in $\S$; we often write $Y^X$ instead of $\Map(X,Y)$.

\subsubsection{The iterated last vertex map}\label{subsec:lastvertex}
Let $\sd:\S\to\S$ be the subdivision functor. There is a natural transformation $\gamma:\sd\to \id_\S$ called the \emph{last vertex map} \cite{goja}*{Section III. 4}. For $X\in\S$, put $\gamma^1_X:=\gamma_X$ and define inductively $\gamma^n_X$ to be the following composite:
\[\xymatrixcolsep{5em}\xymatrix{
	\sd^nX=\sd(\sd^{n-1}X) \ar[r]^-{\gamma^1_{\sd^{n-1}X}} & \sd^{n-1}X \ar[r]^-{\gamma^{n-1}_X} & X}\]
It is immediate that $\gamma^n:\sd^n\to \id_\S$ is a natural transformation. Let $\sd^0:\S\to\S$ be the identity functor and let $\gamma^0:\sd^0\to\id_\S$ be the identity natural transformation.

\begin{lem}
	For any $p,q\geq 0$ and any $X\in\S$ we have:
	\[\gamma^{p+q}_X=\gamma^p_X\circ \sd^p\left(\gamma^q_X\right)=\gamma^p_X\circ \gamma^q_{\sd^pX}\]
\end{lem}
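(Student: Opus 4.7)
My plan is to prove both equalities by induction, relying only on naturality of the last vertex map $\gamma^1:\sd\to\id_\S$ and on the recursive definition $\gamma^n_X=\gamma^{n-1}_X\circ\gamma^1_{\sd^{n-1}X}$. As a preliminary step I would unroll this recursion to obtain the explicit formula
\[\gamma^n_X=\gamma^1_X\circ \gamma^1_{\sd X}\circ\cdots\circ\gamma^1_{\sd^{n-1}X}\]
for all $n\geq 1$ (with $\gamma^0_X=\id_X$ by convention); this is an immediate induction on $n$.

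From this formula the second equality $\gamma^p_X\circ\gamma^q_{\sd^p X}=\gamma^{p+q}_X$ is essentially formal: since $\sd^j(\sd^p X)=\sd^{p+j}X$, both sides equal the composite $\gamma^1_X\circ\gamma^1_{\sd X}\circ\cdots\circ\gamma^1_{\sd^{p+q-1}X}$, merely split at the position $p$. The edge cases $p=0$ or $q=0$ collapse to the identity.

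For the first equality $\gamma^p_X\circ\sd^p(\gamma^q_X)=\gamma^{p+q}_X$, I would induct on $p$ with $q$ fixed. The base $p=0$ is trivial. For the step, I would expand $\gamma^{p+1}_X=\gamma^p_X\circ\gamma^1_{\sd^p X}$ and apply the naturality square of $\gamma^1$ to the morphism $\sd^p(\gamma^q_X):\sd^{p+q}X\to\sd^p X$, which gives
\[\gamma^1_{\sd^p X}\circ\sd^{p+1}(\gamma^q_X)=\sd^p(\gamma^q_X)\circ\gamma^1_{\sd^{p+q}X}.\]
Combining this with the inductive hypothesis $\gamma^p_X\circ\sd^p(\gamma^q_X)=\gamma^{p+q}_X$ and the defining relation $\gamma^{p+q+1}_X=\gamma^{p+q}_X\circ\gamma^1_{\sd^{p+q}X}$ closes the induction.

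The mathematical content of the lemma is therefore only functoriality of $\sd$, naturality of $\gamma^1$, and associativity of composition; the single real hurdle is the careful bookkeeping of which power of $\sd$ decorates each subscript and each instance of $\gamma^1$, so that the naturality square is invoked at the right level.
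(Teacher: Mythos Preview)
Your proof is correct and is essentially the same as the paper's: the paper simply writes ``It follows from a straightforward induction on $n=p+q$,'' and your argument is a careful unpacking of exactly that induction, relying on the recursive definition of $\gamma^n$ and the naturality of $\gamma^1$. The only cosmetic difference is that you organize the induction on $p$ (with $q$ fixed) after first unrolling $\gamma^n$ into a chain of $\gamma^1$'s, whereas the paper phrases it as induction on the sum $p+q$; the underlying mechanism is identical.
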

\begin{proof}
	It follows from a straightforward induction on $n=p+q$.
\end{proof}

\subsubsection{Simplicial cubes} Let $I:=\Delta^1$ and let $\partial I:=\{0,1\}\subset I$. For $n\geq 1$, let $I^n:=I\times\cdots\times I$ be the $n$--fold direct product and let $\partial I^n$ be the following simplicial subset of $I^n$:
\[\partial I^n:=\left[(\partial I)\times I\times\cdots\times I\right]\cup\left[I\times (\partial I)\times\cdots\times I\right]\cup\cdots\cup\left[I\times\cdots\times I\times (\partial I)\right]\]
Let $I^0:=\Delta^0$ and let $\partial I^0:=\emptyset$. We identify $I^{m+n}=I^m\times I^n$ and $\partial (I^{m+n})=\left[(\partial I^m)\times I^n\right]\cup\left[I^m\times (\partial I^n)\right]$ using the associativity and unit isomorphisms of the direct product in $\S$.

\subsubsection{Iterated loop spaces}\label{subsec:loop} Let $(X,*)$ be a pointed fibrant simplicial set. Recall from \cite{goja}*{Section I.7} that the loopspace $\Omega X$ is defined as the fiber of a natural fibration $\pi_X:PX\to X$, where $PX$ has trivial homotopy groups. By the long exact sequence associated to this fibration, we have pointed bijections $\pi_{n+1}(X,*)\cong\pi_n(\Omega X,*)$ for $n\geq 0$ that are group isomorphisms for $n\geq 1$. Iterating the loopspace construction we get:
\[\pi_0(\Omega^nX)\cong \pi_1(\Omega^{n-1}X,*)\cong \cdots \cong \pi_n(X,*)\]
Thus, $\pi_0\Omega^nX$ is a group for $n\geq 1$ and this group is abelian for $n\geq 2$. Moreover, a morphism $\varphi:X\to Y$ of pointed fibrant simplicial sets induces group homomorphisms $\varphi_*:\pi_0\Omega^n X\to \pi_0\Omega^n Y$ for $n\geq 1$. Let $\inc$ denote the inclusion $\partial I^n\to I^n$. It is easy to see that the iterated loop functor $\Omega^n$ on pointed fibrant simplicial sets can be alternatively defined by the following pullback of simplicial sets:
\begin{equation}\label{eq:Omega1}\begin{gathered}\xymatrix{\Omega^nX\ar[r]^-{\iota_{n,X}}\ar[d] & \Map(I^n,X)\ar[d]^-{\inc^*} \\
	\Delta^0\ar[r]^-{*} & \Map(\partial I^n,X) \\}\end{gathered}\end{equation}
We will always use the latter description of $\Omega^n$. Occasionally we will need to compare $\Omega^n$ for different integers $n$; for this purpose we will explicitely describe how the diagram \eqref{eq:Omega1} arises from successive applications of the functor $\Omega$. We start defining $\Omega X$ by the following pullback in $\S$:
\[\xymatrix{\Omega X\ar[r]^-{\iota_{1,X}}\ar[d] & \Map(I,X)\ar[d]^-{\inc^*} \\
	\Delta^0\ar[r]^-{*} & \Map(\partial I,X) \\}\]
For $n\geq 1$, define inductively $\iota_{n+1,X}:\Omega^{n+1}X\to\Map(I^{n+1},X)$ as the following composite:
\[\xymatrix{\Omega\left(\Omega^nX\right)\ar[r]^-{\iota_{1,\Omega^nX}} & \Map\left(I,\Omega^nX\right)\ar[r]^-{(\iota_{n,X})_*} & \Map\left(I,\Map(I^n,X)\right)\cong\Map\left(I^n\times I,X\right)}\]
It is easily verified that \eqref{eq:Omega1} is a pullback. Moreover, $\iota_{m+n,X}$ equals the following composite:
\[\xymatrixcolsep{2em}\xymatrix{\Omega^n\left(\Omega^mX\right)\ar[r]^-{\iota_{n,\Omega^mX}} & \Map\left(I^n,\Omega^mX\right)\ar[r]^-{(\iota_{m,X})_*} & \Map\left(I^n,\Map(I^m,X)\right)\cong\Map\left(I^m\times I^n,X\right)}\]
Thus, under the identification of diagram \eqref{eq:Omega1}, each time we apply $\Omega$ the new $I$-coordinate appears to the right.

\subsection{Simplicial enrichment of algebras}\label{subsec:simpenrich} We proceed to recall some of the details of the simplicial enrichment of $\Algl$ introduced in \cite{cortho}*{Section 3}. Let $\Z^\Delta$ be the simplicial ring defined by:
\[[p]\mapsto\Z^{\Delta^p}:=\Z[t_0,\dots ,t_p]/\langle 1-\textstyle\sum t_i\rangle\]
An order-preserving function $\varphi:[p]\to[q]$ induces a ring homomorphism $\Z^{\Delta^q}\to\Z^{\Delta^p}$ by the formula:
\[t_i\mapsto \sum_{\varphi(j)=i}t_j\]
Now let $B\in\Alg_\ell$ and define a simplicial $\ell$-algebra $B^\Delta$ by:
\begin{equation}\label{eq:defideltap}[p]\mapsto B^{\Delta^p}:=B\otimes \Z^{\Delta^p}\end{equation}
If $A$ is another $\ell$-algebra, the simplicial set $\Hom_\Algl(A,B^\Delta)$ is called the \emph{simplicial mapping space} from $A$ to $B$. For $X\in \S$, put $B^X:=\Hom_\S(X,B^\Delta)$; it is easily verified that $B^X$ is an $\ell$-algebra with the operations defined pointwise. When $X=\Delta^p$, this definition of $B^{\Delta^p}$ coincides with \eqref{eq:defideltap}. We have a natural isomorphism as follows, where the limit is taken over the category of simplices of $X$:
\[B^X\overset{\cong}\to  \displaystyle\lim_{\Delta^p\downarrow X}B^{\Delta^p}\]
For $A,B\in\Alg_\ell$ and $X\in \S$ we have the following adjunction isomorphism:
\[\Hom_\S(X,\Hom_{\Alg_\ell}(A,B^\Delta))\cong\Hom_{\Alg_\ell}(A,B^X)\]

\begin{rem}\label{rem:simpliberreta}
	Let $X$ and $Y$ be simplicial sets. In general $(B^X)^Y\not\cong B^{X\times Y}$ ---this already fails when $X$ and $Y$ are standard simplices; see \cite{cortho}*{Remark 3.1.4}.
\end{rem}

\begin{rem}\label{rem:mult} The simplicial ring $\Z^{\Delta}$ is commutative and hence the same holds for the rings $\Z^X=\Hom_\S(X,\Z^\Delta)$, for any $X\in\S$. Thus, the multiplication in $\Z^X$ induces a ring homomorphism $\mult_X:\Z^X\otimes\Z^X\to\Z^X$. Note that $\mult_X$ is natural in $X$.
\end{rem}

\subsection{Polynomial homotopy}\label{sec:alghtp} Two morphisms $f_0,f_1:A\to B$ in $\Algl$ are \emph{elementary homotopic} if there exists an $\ell$-algebra homomorphism $f:A\to B[t]$ such that $\ev_0\circ f=f_1$ and $\ev_1\circ f=f_0$. Here, $\ev_i$ stands for the evaluation $t\mapsto i$. Equivalently, $f_0$ and $f_1$ are elementary homotopic if there exists $f:A\to B^{\Delta^1}$ such that the following diagram commutes for $i=0,1$:
\[\xymatrixcolsep{3em}\xymatrixrowsep{2em}\xymatrix{A\ar[d]_-{f_i}\ar[r]^-{f} & B^{\Delta^1}\ar[d]^-{(d^i)^*} \\
	B\ar[r]^-{\cong} & B^{\Delta^0} \\}\]
Here the $d^i:\Delta^0\to\Delta^1$ are the coface maps. Elementary homotopy $\sim_e$ is a reflexive and symmetric relation, but it is not transitive ---the concatenation of polynomial homotopies is usually not polynomial. We let $\sim$ be the transitive closure of $\sim_e$ and call $f_0$ and $f_1$ \emph{(polynomially) homotopic} if $f_0\sim f_1$. It is easily shown that $f_0\sim f_1$ iff there exist $r\in\N$ and $f:A\to B^{\sd^r\Delta^1}$ such that the following diagrams commute:
\[\xymatrixcolsep{3em}\xymatrixrowsep{2em}\xymatrix{A\ar[d]_-{f_i}\ar[r]^-{f} & B^{\sd^r\Delta^1}\ar[d]^-{(d^i)^*} \\
	B\ar[r]^-{\cong} & B^{\sd^r\Delta^0} \\}\]
It turns out that $\sim$ is compatible with composition; see \cite{gersten}*{Lemma 1.1} for details. Thus, we have a category $[\Algl]$ whose objects are $\ell$-algebras and whose hom-sets are given by $[A,B]:=\Hom_\Algl(A, B)/\hspace{-0.2em}\sim$. We also have an obvious functor $\Algl\to [\Algl]$.

\begin{defi}[\cite{cortho}*{Definition 3.1.1}]Let $(A,I)$ and $(B,J)$ be two directed diagrams in $\Algl$ and let $f,g\in\Hom_{\Algl^\ind}\left((A,I),(B,J)\right)$. We call $f$ and $g$ \emph{homotopic} if they correspond to the same morphism upon applying the functor $\Algl^\ind\to [\Algl]^\ind$. We also write:
	\[[A_\bul,B_\bul]:=\Hom_{[\Algl]^\ind}\left((A,I),(B,J)\right)=\lim_{i\in I}\colim_{j\in J}[A_i,B_j]\]
\end{defi}

\subsection{Functions vanishing on a subset}\label{sec:vanish}

A \emph{simplicial pair} is a pair $(K,L)$ where $K$ is a simplicial set and $L\subseteq K$ is a simplicial subset. A \emph{morphism of pairs} $f:(K',L')\to (K,L)$ is a morphism of simplicial sets $f:K'\to K$ such that $f(L')\subseteq L$. A simplicial pair $(K,L)$ is \emph{finite} if $K$ is a finite simplicial set. We will only consider finite simplicial pairs, omitting the word ``finite'' from now on. Let $(K,L)$ be a  simplicial pair, let $B\in\Algl$ and let $r\geq 0$. Put:
\[B_r^{(K,L)}:=\ker\left(B^{\sd^rK}\to B^{\sd^rL}\right)\]
The last vertex map induces morphisms $B^{(K,L)}_r\to B^{(K,L)}_{r+1}$ and we usually consider $B^{(K,L)}_\bul$ as a  directed diagram in $\Algl$:
\[B^{(K,L)}_\bul:B^{(K,L)}_0\to B^{(K,L)}_1\to B^{(K,L)}_2\to\cdots\]
Notice that a morphism $f:(K',L')\to (K,L)$ induces a morphism $f^*:B^{(K,L)}_\bul\to B^{(K',L')}_\bul$ of $\Zo$-diagrams.

\begin{lem}[cf. \cite{cortho}*{Proposition 3.1.3}]\label{lem:simppair}
	Let $(K,L)$ be a simplicial pair and let $B\in\Algl$. Then $\Z^{(K,L)}_r$ is a free abelian group and there is a natural $\ell$-algebra isomorphism:
	\begin{equation}\label{eq:natiso1}B\otimes \Z^{(K,L)}_r\overset{\cong}\to B^{(K,L)}_r\end{equation}
\end{lem}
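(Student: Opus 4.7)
The plan is to reduce both claims to the absolute case \cite{cortho}*{Proposition 3.1.3} via a kernel computation. First, apply that proposition to the finite simplicial sets $\sd^r K$ and $\sd^r L$ to obtain natural $\ell$-algebra isomorphisms $B\otimes\Z^{\sd^r K}\cong B^{\sd^r K}$ and $B\otimes\Z^{\sd^r L}\cong B^{\sd^r L}$, with both $\Z^{\sd^r K}$ and $\Z^{\sd^r L}$ free abelian. Naturality in the simplicial set turns the defining restriction $B^{\sd^r K}\to B^{\sd^r L}$ into $\id_B\otimes\rho$, where $\rho\colon\Z^{\sd^r K}\to\Z^{\sd^r L}$ is the restriction. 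Hence $B^{(K,L)}_r$ is identified with $\ker(\id_B\otimes\rho)$.

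The decisive step is showing that $\rho$ is surjective. Once this is established, the short exact sequence
\[0\to\Z^{(K,L)}_r\to\Z^{\sd^r K}\xrightarrow{\rho}\Z^{\sd^r L}\to 0\]
splits because $\Z^{\sd^r L}$ is free abelian, so it remains exact upon applying $B\otimes -$. Combined with the identifications of the first paragraph, this produces the natural isomorphism $B\otimes\Z^{(K,L)}_r\cong B^{(K,L)}_r$; freeness of $\Z^{(K,L)}_r$ follows immediately, since it embeds into the free abelian group $\Z^{\sd^r K}$.

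To verify the surjectivity of $\rho$, I would build $\sd^r K$ from $\sd^r L$ by attaching the non-degenerate simplices of $\sd^r K\setminus\sd^r L$ one at a time. Each such attachment is a pushout in $\S$ of a boundary inclusion $\partial\Delta^n\hookrightarrow\Delta^n$; applying the contravariant functor $\Hom_\S(-,\Z^\Delta)$ turns it into a pullback of abelian groups, along which surjectivity is transferred from the parallel arrow. Thus it suffices to verify that $\Z^{\Delta^n}\to\Z^{\partial\Delta^n}$ is surjective for every $n\geq 0$, which is an elementary integral affine interpolation in barycentric coordinates. This last verification is the main obstacle of the argument; everything else is formal bookkeeping once \cite{cortho}*{Proposition 3.1.3} is in hand.
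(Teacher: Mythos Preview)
Your proof is correct and follows essentially the same route as the paper: establish the short exact sequence $0\to\Z^{(K,L)}_r\to\Z^{\sd^rK}\to\Z^{\sd^rL}\to 0$, observe it splits because $\Z^{\sd^rL}$ is free by \cite{cortho}*{Proposition 3.1.3}, and tensor with $B$. The only difference is that the paper dispatches the surjectivity of $\rho$ by citing \cite{cortho}*{Lemma 3.1.2} directly, whereas you sketch an independent proof via cell attachment and pullback-stability of surjections in abelian groups; your sketch is sound, but the citation is all that is needed here.
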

\begin{proof}
	The following sequence is exact by definition of $\Z^{(K,L)}_r$ and \cite{cortho}*{Lemma 3.1.2}:
	\begin{equation}\label{eq:sec}\xymatrix{0\ar[r] & \Z^{(K,L)}_r\ar[r] & \Z^{\sd^rK}\ar[r] & \Z^{\sd^rL}\ar[r] & 0 \\}\end{equation}
	The group $\Z^{\sd^rL}$ is free abelian by \cite{cortho}*{Proposition 3.1.3} and thus the sequence \eqref{eq:sec} splits. It follows that $\Z^{(K,L)}_r$ is free because it is a direct summand of the free abelian group $\Z^{\sd^rK}$. Moreover, the following sequence is exact:
	\[\xymatrix{0\ar[r] & B\otimes\Z^{(K,L)}_r\ar[r] & B\otimes\Z^{\sd^rK}\ar[r] & B\otimes\Z^{\sd^rL}\ar[r] & 0 \\}\]
	To finish the proof we identify $B\otimes \Z^{\sd^rK}\overset{\cong}\to B^{\sd^rK}$ using the natural isomorphism of \cite{cortho}*{Proposition 3.1.3}.
\end{proof}

\begin{exa}\label{exa:fSn}
	Following \cite{garku}*{Section 7.2}, we will write $B^{\fS_n}_\bul$ instead of $B^{(I^n,\partial I^n)}_\bul$. Note that $B^{\fS_0}_\bul$ is the constant $\Zo$-diagram $B$.
\end{exa}

\section{Main theorem}\label{sec:main}

\subsection{Multiplication morphisms}\label{sec:multi}

Let $(K,L)$ and $(K',L')$ be simplicial pairs. It follows from Lemma \ref{lem:simppair} that $\Z_r^{(K,L)}\otimes \Z_s^{(K',L')}$ identifies with a subring of $\Z^{\sd^rK}\otimes\Z^{\sd^sK'}$. Let $\mu^{K,K'}$ be the composite of the following ring homomorphisms:
\[\xymatrixcolsep{7em}\xymatrix{\Z^{\sd^rK}\otimes\Z^{\sd^sK'}\ar@{..>}[d]_-{\mu^{K,K'}}\ar[r]^-{(\gamma^s)^*\otimes(\gamma^r)^*} & \Z^{\sd^{r+s}K}\otimes \Z^{\sd^{r+s}K'}\ar[d]^-{(\pr_1)^*\otimes (\pr_2)^*} \\
	\Z^{\sd^{r+s}(K\times K')} & \Z^{\sd^{r+s}(K\times K')}\otimes \Z^{\sd^{r+s}(K\times K')}\ar[l]_-{\mult} \\ }\]
Here $\gamma^j$ is the iterated last vertex map defined in section \ref{subsec:lastvertex}, $\pr_i$ is the projection of the direct product into its $i$-th factor and $\mult$ is the map described in Remark \ref{rem:mult}.

\begin{lem}\label{lem:multi}
	The morphism $\mu^{K,K'}$ defined above induces a ring homomorphism $\mu^{(K,L),(K',L')}$ that fits into the following diagram:
	\[\xymatrixcolsep{5em}\xymatrix{\Z^{(K,L)}_r\otimes \Z^{(K',L')}_s\ar[r]^-{\inc}\ar[d]_-{\mu^{(K,L),(K',L')}} & \Z^K_r\otimes \Z^{K'}_s\ar[d]^-{\mu^{K,K'}} \\
	\Z^{(K\times K',(K\times L')\cup (L\times K'))}_{r+s}\ar[r]^-{\inc} & \Z^{K\times K'}_{r+s}}\]
	Moreover, $\mu^{(K,L),(K',L')}$ is natural in both variables with respect to morphisms of simplicial pairs. We call $\mu^{(K,L),(K',L')}$ a \emph{multiplication morphism}.
\end{lem}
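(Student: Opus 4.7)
The plan is to show that $\mu^{K,K'}$ carries $\Z_r^{(K,L)}\otimes\Z_s^{(K',L')}$ into $\Z^{(K\times K',(K\times L')\cup(L\times K'))}_{r+s}$; then the desired $\mu^{(K,L),(K',L')}$ is obtained by restriction, automatically inherits the ring structure from $\mu^{K,K'}$, and naturality will be a formal consequence of the naturality of the ingredients $\gamma^j$, $\pr_i$ and $\mult$. The one substantive point is the vanishing property; everything else is bookkeeping.

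For the vanishing, I would unravel the definition of $\mu^{K,K'}$: given $x\in\Z^{\sd^rK}$ and $y\in\Z^{\sd^sK'}$, the element $\mu^{K,K'}(x\otimes y)$ is simply the pointwise product, in the ring $\Z^{\sd^{r+s}(K\times K')}$, of the two functions
\[u:=(\pr_1)^*(\gamma^s)^*(x)\quad\text{and}\quad v:=(\pr_2)^*(\gamma^r)^*(y).\]
It therefore suffices to show that, whenever $x\in\Z^{(K,L)}_r$ and $y\in\Z^{(K',L')}_s$, the factor $u$ vanishes on $\sd^{r+s}(L\times K')$ and $v$ vanishes on $\sd^{r+s}(K\times L')$; the product then vanishes on the union.

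The key step is naturality of $\gamma^s$ applied to the inclusion $L\hookrightarrow K$: the square
\[\xymatrix{\sd^{r+s}L\ar[r]\ar[d]_-{\gamma^s} & \sd^{r+s}K\ar[d]^-{\gamma^s} \\ \sd^rL\ar[r] & \sd^rK}\]
commutes, so $(\gamma^s)^*(x)$, being the pullback of an element vanishing on $\sd^rL$, vanishes on $\sd^{r+s}L$. Since $\pr_1$ sends $L\times K'$ into $L$, pulling back once more yields that $u$ vanishes on $\sd^{r+s}(L\times K')$. The argument for $v$ is symmetric, using naturality of $\gamma^r$ for $L'\hookrightarrow K'$.

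Having established the vanishing, one defines $\mu^{(K,L),(K',L')}$ as the unique factorization through the kernel $\Z^{(K\times K',(K\times L')\cup(L\times K'))}_{r+s}$; uniqueness, together with the fact that $\mu^{K,K'}$ is a ring homomorphism, forces $\mu^{(K,L),(K',L')}$ to be one too, and makes the claimed square commute. Naturality in each variable with respect to a morphism of pairs $f\colon (\widetilde K,\widetilde L)\to(K,L)$ follows from naturality of $\gamma^s$, $\pr_1$ and $\mult$ with respect to $f\times\id$, combined with the uniqueness of the factorization through the kernel. I expect the only mildly delicate point in writing out the full proof to be keeping straight the indices $r$ and $s$ in the swapped last-vertex maps $(\gamma^s)^*\otimes(\gamma^r)^*$ when checking naturality.
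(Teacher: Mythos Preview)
Your proposal is correct and is essentially the same argument as the paper's. The paper phrases the vanishing check diagrammatically (tracing $\varepsilon$ through a large commutative square built from $i^*\otimes 1$, $(\gamma^j)^*$, $(\pr_i)^*$ and $\mult$), while you unpack it elementwise as $\mu^{K,K'}(x\otimes y)=u\cdot v$ and observe that one factor dies on each piece; both reduce to the same naturality facts, and the paper's explicit remark that $\Z^{\sd^{r+s}(?)}$ takes colimits to limits is exactly what justifies your sentence ``the product then vanishes on the union.''
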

\begin{proof}
	Let $\varepsilon$ be the restriction of $\mu^{K,K'}$ to $\Z^{(K,L)}_r\otimes\Z^{(K',L')}_s$; we have to show that $\varepsilon$ is zero when composed with the morphism:
	\[\Z^{\sd^{r+s}(K\times K')}\to \Z^{\sd^{r+s}((K\times L')\cup (L\times K'))}\]
	Since the functor $\Z^{\sd^{r+s}(?)}:\S\to \Alg_\Z^{\op}$ commutes with colimits, it will be enough to show that $\varepsilon$ is zero when composed with the projections to $\Z^{\sd^{r+s}(K\times L')}$ and to $\Z^{\sd^{r+s}(L\times K')}$; this is a straightforward check. For example, the following commutative diagram shows that $\varepsilon$ is zero when composed with the projection to $\Z^{\sd^{r+s}(L\times K')}$; we write $i$ for the inclusion $L\subseteq K$.
	\[\xymatrixcolsep{3em}\begin{gathered}\xymatrix{\Z^{(K,L)}_r\otimes\Z^{(K',L')}_s \ar[d]\ar@/^/[dr]^-{0} & \\
		\Z^{\sd^rK}\otimes \Z^{\sd^sK'}\ar[r]^-{i^*\otimes 1}\ar[d]_-{(\gamma^s)^*\otimes(\gamma^r)^*} & \Z^{\sd^rL}\otimes \Z^{\sd^sK'}\ar[d]^-{(\gamma^s)^*\otimes(\gamma^r)^*} \\
		\Z^{\sd^{r+s}K}\otimes \Z^{\sd^{r+s}K'}\ar[r]^-{i^*\otimes 1}\ar[d]_-{(\pr_1)^*\otimes (\pr_2)^*} & \Z^{\sd^{r+s}L}\otimes \Z^{\sd^{r+s}K'}\ar[d]^-{(\pr_1)^*\otimes (\pr_2)^*} \\
		\Z^{\sd^{r+s}(K\times K')}\otimes \Z^{\sd^{r+s}(K\times K')}\ar[r]^-{i^*\otimes i^*}\ar[d]_-{\mult} & \Z^{\sd^{r+s}(L\times K')}\otimes \Z^{\sd^{r+s}(L\times K')}\ar[d]^-{\mult} \\
		\Z^{\sd^{r+s}(K\times K')}\ar[r]^-{i^*} & \Z^{\sd^{r+s}(L\times K')} \\}\end{gathered}\]
	The assertion about naturality is clear.\end{proof}

\begin{rem}\label{rem:mular}
	We can consider $\Z^{(K,L)}_\bul\otimes\Z^{(K',L')}_\bul$ as a directed diagram of rings indexed over $\Zo\times \Zo$. Let $\theta:\Zo\times\Zo\to\Zo$ be defined by $\theta(r,s)=r+s$; it is clear that $\theta$ is a functor. Then the morphisms of Lemma \ref{lem:multi} assemble into a morphism in $\lar{\Alg_\Z}$:
	\[\xymatrix{\left(\mu^{(K,L),(K',L')},\theta\right):\Z^{(K,L)}_\bul\otimes\Z^{(K',L')}_\bul\ar[r] & \Z^{(K\times K',(K\times L')\cup (L\times K'))}_\bul}\]
	We will often think of $\mu^{(K,L),(K',L')}$ in this way, omitting $\theta$ from the notation.
\end{rem}

\begin{rem}\label{rem:mub}
	Upon tensoring $\mu^{(K,L),(K',L')}$ with an $\ell$-algebra $B$ and using \eqref{eq:natiso1} we obtain an $\ell$-algebra homomorphism:
	\[\xymatrix{\mu^{(K,L),(K',L')}_B:\left(B^{(K,L)}_r\right)^{(K',L')}_s\ar[r] & B^{(K\times K', (K\times L')\cup(L\times K'))}_{r+s}}\]
	This morphism is obviously natural with respect to morphisms of simplicial pairs and with respecto to $\ell$-algebra homomorphisms. Again, we can think of it as a morphism in $\lar{\Algl}$. It is easily verified that this morphism is associative in the obvious way.
\end{rem}

\begin{exa}\label{exa:muhtpyequiv}
	For any $n\geq 0$ and any $B\in\Algl$ we have a morphism $\iota:B\to B^{\Delta^n}$ induced by $\Delta^n\to\ast$. It is well known that $\iota$ is a homotopy equivalence, as we proceed to explain. Let $v:B^{\Delta^n}\to B$ be the restriction to the $0$-simplex $0$. Explicitely, we have $v(t_i)=0$ for $i>0$ and $v(t_0)=1$. It is easily verified that $v\circ\iota=\id_B$. Now let $H:B^{\Delta^p}\to B^{\Delta^n}[u]$ be the elementary homotopy defined by $H(t_i)=ut_i$ for $i>0$ and $H(t_0)=t_0+(1-u)(t_1+\cdots +t_n)$. We have $\ev_1\circ H=\id_{B^{\Delta^n}}$ and $\ev_0\circ H=\iota\circ v$. This shows that $\iota\circ v=\id_{B^{\Delta^n}}$ in $[\Algl]$.
	
	The homotopy $H$ constructed above is natural with respect to the inclusion of faces of $\Delta^n$ that contain the $0$-simplex $0$. More precisely: if $f:[m]\to [n]$ is an injective order-preserving map such that $f(0)=0$, then the following diagram commutes:
	\[\xymatrix{B^{\Delta^n}\ar[d]_-{f^*}\ar[r]^-{H} & B^{\Delta^n}[u]\ar[d]^-{f^*[u]} \\
		B^{\Delta^m}\ar[r]^-{H} & B^{\Delta^m}[u] }\]
	
	Now let $p,q\geq 0$. Recall from the proof of \cite{hovey}*{Lemma 3.1.8} that the simplices of $\Delta^p\times\Delta^q$ can be identified with the chains  in $[p]\times[q]$ with the product order. The nondegenerate $(p+q)$-simplices of $\Delta^p\times\Delta^q$ are identified with the maximal chains in $[p]\times[q]$; there are exactly $\binom{p+q}{p}$ of these. Following \cite{hovey}, let $c(i)$ for $1\leq i\leq \binom{p+q}{p}$ be the complete list of maximal chains of $[p]\times[q]$. Then $\Delta^p\times\Delta^q$ is the coequalizer in $\S$ of the two natural morphisms of simplicial sets $f$ and $g$ induced by the inclusions $c(i)\cap c(j)\subseteq c(i)$ and $c(i)\cap c(j)\subseteq c(j)$ respectively:
	\[\xymatrixcolsep{3em}\xymatrix{\displaystyle f,g:\coprod_{1\leq i<j\leq\binom{p+q}{p}}\Delta^{n_{c(i)\cap c(j)}}\ar@<-.5ex>[r] \ar@<.5ex>[r] & \displaystyle\coprod_{1\leq i\leq\binom{p+q}{p}}\Delta^{n_{c(i)}}}\]
	Here $n_c$ is the number of edges in $c$; that is, the dimension of the nondegenerate simplex corresponding to $c$. Since $B^?:\S^\op\to\Algl$ preserves limits, it follows that $B^{\Delta^p\times\Delta^q}$ is the equalizer of the following diagram in $\Algl$:
	\begin{equation}\label{eq:Hpq1}\xymatrixcolsep{3em}\xymatrix{\displaystyle f^*,g^*:\displaystyle\prod_{1\leq i\leq\binom{p+q}{p}}B^{\Delta^{n_{c(i)}}}\ar@<-.5ex>[r] \ar@<.5ex>[r] & \displaystyle\prod_{1\leq i<j\leq\binom{p+q}{p}}B^{\Delta^{n_{c(i)\cap c(j)}}}}\end{equation}
	Moreover, since $\Z[u]$ is a flat ring, $?\otimes\Z[u]$ preserves finite limits and $B^{\Delta^p\times\Delta^q}[u]$ is the equalizer of the following diagram:
	\begin{equation}\label{eq:Hpq2}\xymatrixcolsep{3em}\xymatrix{\displaystyle f^*[u],g^*[u]:\displaystyle\prod_{1\leq i\leq\binom{p+q}{p}}B^{\Delta^{n_{c(i)}}}[u]\ar@<-.5ex>[r] \ar@<.5ex>[r] & \displaystyle\prod_{1\leq i<j\leq\binom{p+q}{p}}B^{\Delta^{n_{c(i)\cap c(j)}}}[u]}\end{equation}
	Notice that every maximal chain of $[p]\times[q]$ starts at $(0,0)$. This implies, by the discussion above on the naturality of $H$, that the following diagram commutes for every $i$ and $j$:
	\[\xymatrix{B^{\Delta^{n_{c(i)}}}\ar[d]\ar[r]^-{H} & B^{\Delta^{n_{c(i)}}}[u]\ar[d] \\
		B^{\Delta^{n_{c(i)\cap c(j)}}}\ar[r]^-{H} & B^{\Delta^{n_{c(i)\cap c(j)}}}[u] }\]
	Then the homotopy $H$ on the different $B^{\Delta^{n_{c(i)}}}$ gives a morphism of diagrams from \eqref{eq:Hpq1} to \eqref{eq:Hpq2} that induces $H:B^{\Delta^p\times\Delta^q}\to B^{\Delta^p\times\Delta^q}[u]$. Let $\iota:B\to B^{\Delta^p\times\Delta^q}$ be the morphism induced by $\Delta^p\times\Delta^q\to \ast$ and let $v:B^{\Delta^p\times\Delta^q}\to B$ be the restriction to the $0$-simplex $(0,0)$. It is easily verified that $\ev_1\circ H$ is the identity of $B^{\Delta^p\times\Delta^q}$ and that $\ev_0\circ H=\iota\circ v$; this shows that $\iota$ is a homotopy equivalence.
	
	Finally, consider the following commutative diagram. Since each $\iota$ is a homotopy equivalence, it follows that $\mu^{\Delta^p,\Delta^q}:(B^{\Delta^p})^{\Delta^q}\to B^{\Delta^p\times\Delta^q}$ is a homotopy equivalence too.
	\[\xymatrix{B^{\Delta^p}\ar[r]^-{\iota} & (B^{\Delta^p})^{\Delta^q}\ar[d]^-{\mu} \\
		B\ar[r]^-{\iota}\ar[u]^-{\iota} & B^{\Delta^p\times\Delta^q}}\]
	
	The author does not know whether $\mu^{K,K'}:(B^K)^{K'}\to B^{K\times K'}$ is a homotopy equivalence for general $K$ and $K'$. It is true, though, that $\mu^{K,K'}$ induces an isomorphism in any homotopy invariant and excisive homology theory, as we explain below.
\end{exa}

\begin{exa}
	Let $\tria$ be a triangulated category with desuspension $\Omega$ and let $j:\Algl\to\tria$ be a functor satisfying properties (H) and (E). We will show that $j\left(\mu^{K,K'}:(B^K)^{K'}\to B^{K\times K'}\right)$ is an isomorphism, for any $K$, $K'$ and $B$.
	
	Fix an object $U$ of $\tria$ and consider a short exact sequence of $\ell$-algebras
	\[\scrE:\xymatrix{A'\ar[r] & A\ar[r] & A''\\}\]
	that splits as a sequence of $\ell$-modules. To alleviate notation, we will write $\tria^U_n(A)$ instead of $\Hom_\tria(U,\Omega^nj(A))$, for any $n\in\Z$. By property (E), we have a distinguished triangle
	\[\triangle_\scrE:\xymatrix{\Omega j(A'')\ar[r] & j(A')\ar[r] & j(A)\ar[r] & j(A'')\\}\]
	that induces a long exact sequence of groups as follows:
	\[\xymatrix{\ar@{..>}[r]& \tria_{n+1}^U(A'')\ar[r] & \tria_n^U(A')\ar[r] & \tria_n^U(A)\ar[r] & \tria_n^U(A'')\ar@{..>}[r] & \\}\]
	This sequence is moreover natural in $\scrE$. Now consider a cartesian square of $\ell$-algebras where the horizontal morphisms are split surjections of $\ell$-modules:
	\[\xymatrix{A\ar[r]\ar[d] & A'\ar[d] \\
	A''\ar[r] & A'''}\]
	Proceeding as in the proof of \cite{ralf}*{Theorem 2.41}, we get an exact Mayer-Vietoris sequence:
	\[\xymatrix{\ar@{..>}[r]& \tria_{n+1}^U(A''')\ar[r] & \tria_n^U(A)\ar[r] & \tria_n^U(A')\oplus \tria_n^U(A'')\ar[r] & \tria_n^U(A''')\ar@{..>}[r] & \\}\]
	This sequence is natural with respect to morphisms of squares.
	
	Let $q\geq 0$. We will show that $j(\mu^{K,\Delta^q})$ is an isomorphism by induction on the dimension of $K$. The case $\dim K=0$ follows from the facts that $j$ preserves finite products and that $\mu^{\Delta^0,\Delta^q}$ is an $\ell$-algebra isomorphism. Now let $n\geq 0$ and suppose $j(\mu^{K',\Delta^q})$ is an isomorphism for every finite $L$ with $\dim L\leq n$. If $\dim K=n+1$, we have a cocartesian square:
	\[\xymatrix{K & \sk^nK\ar[l] \\
	\coprod_1^r\Delta^{n+1}\ar[u] & \coprod_1^r\partial\Delta^{n+1}\ar[u]\ar[l]}\]
	Upon applying the functors $(B^{?})^{\Delta^q}$ and $B^{?\times\Delta^q}$ we get the following cartesian squares:
	\[\xymatrix{(B^K)^{\Delta^q}\ar[d]\ar[r] & (B^{\sk^nK})^{\Delta^q}\ar[d] \\
	\prod_1^r(B^{\Delta^{n+1}})^{\Delta^q}\ar[r] & \prod_1^r(B^{\partial\Delta^{n+1}})^{\Delta^q}}
	\hspace{1em}\xymatrix{B^{K\times\Delta^q}\ar[d]\ar[r] & B^{\sk^nK\times\Delta^q}\ar[d] \\
	\prod_1^rB^{\Delta^{n+1}\times\Delta^q}\ar[r] & \prod_1^rB^{\partial\Delta^{n+1}\times\Delta^q}}\]
	The horizontal morphisms in these diagrams are split surjections of $\ell$-modules. Indeed, the morphism $\Z^K\to \Z^{\sk^nK}$ is a split surjection of abelian groups since it is surjective by \cite{cortho}*{Lemma 3.1.2} and $\Z^{\sk^nK}$ is a free abelian group by \cite{cortho}*{Proposition 3.1.3}. Upon tensoring with $B$, we get that  $B^K\to B^{\sk^nK}$ is a split surjection of $\ell$-modules. Similar arguments apply to the remaining horizontal morphisms. By naturality of $\mu$, we have a morphism of squares from the square on the left to the square on the right; this induces a morphism of long exact Mayer-Vietoris sequences upon applying $\tria^U_*$. Note that $\tria^U_*(\mu^{\Delta^{n+1},\Delta^q})$ is an isomorphism by Example \ref{exa:muhtpyequiv}. It follows from the 5-lemma that
	\[(\mu^{K,\Delta^q})_*:\Hom_\tria(U,j((B^K)^{\Delta^q}))\to\Hom_\tria(U,j(B^{K\times\Delta^q}))\]
	is an isomorphism. Since $U$ is arbitrary, this implies that $j(\mu^{K,\Delta^q})$ is an isomorphism. Now we can show that $j(\mu^{K,K'})$ is an isomorphism by induction on the dimension of $K'$.
\end{exa}

\subsection{Main theorem}\label{sec:mainthm} Following \cite{garku}*{Section 7.2}, we put $\widetilde{B}^{\fS_n}_\bul:=B^{(I^n\times I,\partial I^n\times I)}_\bul$. The coface maps $d^i:\Delta^0\to I$ induce morphisms $(d^i)^*:\widetilde{B}^{\fS_n}_\bul\to B^{\fS_n}_\bul$.

\begin{lem}[Garkusha]\label{lem:haupt2}
	Let $f:A\to \widetilde{B}^{\fS_n}_r$ be an $\ell$-algebra homomorphism. Then the following composites are homotopic; i.e. they belong to the same class in $[A,B^{\fS_n}_r]$:
	\[A\overset{f}\longrightarrow \widetilde{B}^{\fS_n}_r\overset{(d^i)^*}\longrightarrow B^{\fS_n}_r\hspace{1em} (i=0,1)\]
\end{lem}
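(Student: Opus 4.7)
The plan is to construct an explicit polynomial homotopy. By the characterization of polynomial homotopy recalled in section \ref{sec:alghtp}, it suffices to exhibit an integer $q\geq 0$ and an $\ell$-algebra homomorphism
\[\widetilde{h}:A\to (B^{\fS_n}_r)^{\sd^q\Delta^1}\]
whose restrictions along the two endpoint inclusions $d^0,d^1:\Delta^0\to\sd^q\Delta^1$ recover $(d^0)^*\circ f$ and $(d^1)^*\circ f$. Morally, $f$ itself already \emph{is} such a homotopy: the target $\widetilde{B}^{\fS_n}_r=B^{(I^n\times I,\partial I^n\times I)}_r$ is free in the extra $I$-coordinate (no boundary vanishing is imposed there), so that coordinate behaves as a formal homotopy parameter. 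The content of the proof is to convert this simplicial parameter into a polynomial one.

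Because of Remark \ref{rem:simpliberreta} there is no natural algebra isomorphism $\widetilde{B}^{\fS_n}_r\cong (B^{\fS_n}_r)^{\Delta^1}$, and the multiplication morphism of Lemma \ref{lem:multi} goes in the \emph{wrong} direction
\[\mu:(B^{\fS_n}_r)^{\Delta^1}\to \widetilde{B}^{\fS_n}_r.\]
So $\widetilde{h}$ cannot be obtained as a one-line inversion of $\mu$; it must be constructed by hand, with the freedom to enlarge the subdivision parameter $q$. The approach I would take is combinatorial: $\sd^q\Delta^1$ is a path of $2^q$ edges, and I would define $\widetilde{h}$ edge-by-edge. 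Over each edge, the datum comes from restricting $f$ to the corresponding slab of $\sd^r(I^n\times I)$; for $q$ large enough (compared to $r$) this slab should, after pulling back through a suitable combination of the iterated last vertex map $\gamma^q$ and the pair map $\id_{I^n}\times\gamma^q$, be naturally identifiable with an elementary polynomial algebra $B^{\fS_n}_r[t]$. Consecutive edges share a vertex and the two adjacent elementary homotopies both evaluate at that vertex to the restriction of $f$ at the corresponding vertex of $\sd^q I$, so the pieces glue into a global morphism $\widetilde{h}:A\to(B^{\fS_n}_r)^{\sd^q\Delta^1}$. Finally, evaluating $\widetilde{h}$ at the extreme vertices of $\sd^q\Delta^1$, which are sent by $\gamma^q$ to $0,1\in\Delta^1$, gives $(d^0)^*\circ f$ and $(d^1)^*\circ f$ by construction.

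The main obstacle is the slab identification step: identifying, for each edge of $\sd^q\Delta^1$, the corresponding restriction of $f$ with an honest elementary polynomial homotopy into $B^{\fS_n}_r[t]$. This is precisely the place where the failure of $(B^X)^Y\cong B^{X\times Y}$ from Remark \ref{rem:simpliberreta} bites, and this is where the subdivision parameter $q$ has to be chosen carefully as a function of $r$ and $n$. This combinatorial bookkeeping is the heart of Garkusha's original constructive proof of this Hauptlemma and is the step that I would expect to require the most care to execute in detail.
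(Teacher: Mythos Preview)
The paper does not give its own proof of this lemma; it simply cites \cite{garku}*{Hauptlemma (2)}. So there is nothing to compare your argument against except Garkusha's original proof, which you yourself invoke at the end of your proposal.

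Your outline is in the right spirit and matches the shape of Garkusha's constructive argument: one does convert the extra $I$-coordinate into a polynomial parameter after passing to a sufficiently fine subdivision, and the construction is indeed edge-by-edge along $\sd^q\Delta^1$. However, what you have written is a plan, not a proof. The step you label ``the main obstacle''---identifying the restriction of $f$ over each edge of $\sd^q\Delta^1$ with an elementary homotopy in $(B^{\fS_n}_r)[t]$---is precisely the entire content of the lemma, and you have not carried it out. You say that ``for $q$ large enough (compared to $r$) this slab should\ldots be naturally identifiable with an elementary polynomial algebra $B^{\fS_n}_r[t]$,'' but you give no indication of which $q$ works, which maps effect the identification, or why the identification is compatible at shared vertices. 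Since the paper itself is content to cite Garkusha for this result, your proposal is no worse than the paper's in terms of self-containedness; but as a standalone proof it is incomplete at exactly the point where the work lies.
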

\begin{proof}
	This is \cite{garku}*{Hauptlemma (2)}.
\end{proof}

As noted by Garkusha in \cite{garku}, Lemma \ref{lem:haupt2} shows that if two morphisms are simplicially homotopic, then they are polynomially homotopic. The converse also holds, up to increasing the number of subdivisions:

\begin{lem}[cf. \cite{garku}*{Hauptlemma (3)}]\label{lem:haupt}
	Let $H:A\to (B^{\fS_n}_r)^{\sd^sI}$ be a homotopy between two $\ell$-algebra homomorphisms $A\to B^{\fS_n}_r$. Then there exists a morphism $\widetilde{H}:A\to \widetilde{B}^{\fS_n}_{r+s}$ in $\Algl$ such that the following diagram commutes for $i=0,1$:
	\[\xymatrix{A\ar[r]^-{H}\ar[d]_-{\widetilde{H}} & (B^{\fS_n}_r)^{\sd^sI}\ar[r]^-{(d^i)^*} & B^{\fS_n}_r\ar[d]^-{(\gamma^s)^*} \\
		\widetilde{B}^{\fS_n}_{r+s}\ar[rr]^-{(d^i)^*} & & B^{\fS_n}_{r+s} \\}\]
\end{lem}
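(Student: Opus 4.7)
The plan is to set $\widetilde H := \mu \circ H$ for a specific instance of the multiplication morphism from Remark \ref{rem:mub} and to extract the required commutativities from naturality of $\mu$. This realises the remark in the introduction that this lemma is an immediate consequence of the existence of multiplication morphisms.

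The first step is to apply Remark \ref{rem:mub} with $(K,L) = (I^n, \partial I^n)$ and $(K',L') = (I, \emptyset)$. Because $B^{(I,\emptyset)}_s = B^{\sd^s I}$ and $(I^n \times \emptyset) \cup (\partial I^n \times I) = \partial I^n \times I$, this produces an $\ell$-algebra homomorphism
\[
\mu := \mu^{(I^n,\partial I^n),(I,\emptyset)}_B : \bigl(B^{\fS_n}_r\bigr)^{\sd^s I} \longrightarrow \widetilde B^{\fS_n}_{r+s},
\]
so I can define $\widetilde H := \mu \circ H : A \to \widetilde B^{\fS_n}_{r+s}$.

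Next, to verify the two triangles for $i = 0, 1$, I would invoke naturality of $\mu$ in the second pair argument along the morphism of pairs $d^i : (\Delta^0, \emptyset) \to (I, \emptyset)$. This produces a commutative square whose top row is $\mu$; whose left vertical is the inner $(d^i)^* : (B^{\fS_n}_r)^{\sd^s I} \to B^{\fS_n}_r$ (pullback along $\sd^s d^i$); whose right vertical is $(d^i)^* : \widetilde B^{\fS_n}_{r+s} \to B^{\fS_n}_{r+s}$, induced by $\id_{I^n} \times d^i$ and hence exactly the outer coface map defined just before Lemma \ref{lem:haupt2}; and whose bottom row is the degenerate multiplication morphism $\mu^{(I^n,\partial I^n),(\Delta^0,\emptyset)}_B : B^{\fS_n}_r \to B^{\fS_n}_{r+s}$ coming from the pair identification $(I^n \times \Delta^0, \partial I^n \times \Delta^0 \cup I^n \times \emptyset) = (I^n, \partial I^n)$.

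The last step is to identify this bottom arrow with $(\gamma^s)^*$. Tracing through the definition in section \ref{sec:multi} with $K' = \Delta^0$: one has $\Z^{\sd^s \Delta^0} = \Z$, so the $(\pr_2)^*$-component lands on the unit of $\Z^{\sd^{r+s} I^n}$, and the subsequent multiplication by $1$ leaves the first factor unchanged. Hence the bottom row is $(\gamma^s)^*$, and the square gives exactly $(d^i)^* \widetilde H = (\gamma^s)^* (d^i)^* H$ after precomposing with $H$. I do not expect a serious obstacle; the only points needing care are the two identifications of pairs just mentioned and the explicit computation that the trivial-second-variable multiplication morphism reduces to $(\gamma^s)^*$.
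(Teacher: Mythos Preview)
Your proposal is correct and follows exactly the same approach as the paper: define $\widetilde{H}$ as the composite of $H$ with $\mu^{(I^n,\partial I^n),(I,\emptyset)}_B$, then invoke naturality of $\mu$. The paper's proof simply states that the required commutativity ``is immediate from the naturality of $\mu$,'' whereas you have spelled out the naturality square along $d^i:(\Delta^0,\emptyset)\to(I,\emptyset)$ and verified that the degenerate multiplication $\mu^{(I^n,\partial I^n),(\Delta^0,\emptyset)}_B$ reduces to $(\gamma^s)^*$; this is a welcome elaboration of what the paper leaves implicit.
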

\begin{proof}
	Let $\widetilde{H}$ be the composite:
	\[\xymatrixcolsep{7em}\xymatrix{A\ar[r]^-{H} & (B^{(I^n,\partial I^n)}_r)^{(I,\emptyset)}_s\ar[r]^-{\mu^{(I^n,\partial I^n),(I,\emptyset)}} & B^{(I^n\times I, (\partial I^n)\times I)}_{r+s}\\}\]
	It is immediate from the naturality of $\mu$ that $\widetilde{H}$ satisfies the required properties.
\end{proof}

\begin{thm}[cf. \cite{cortho}*{Theorem 3.3.2}]\label{thm:bij}
	For any pair of $\ell$-algebras $A$ and $B$ and any $n\geq 0$, there is a natural bijection:
	\begin{equation}\label{eq:bij}\pi_n\Hom_\Algl(A,B^\Delta)\cong [A,B^{\fS_n}_\bul]\end{equation}
\end{thm}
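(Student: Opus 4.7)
The plan is to construct an explicit bijection $\Phi:\pi_n\Hom_\Algl(A,B^\Delta)\to [A,B^{\fS_n}_\bul]$ and then deduce bijectivity from Lemmas \ref{lem:haupt2} and \ref{lem:haupt}, one for each direction. First I would pass to the fibrant replacement $\Ex^\infty\Hom_\Algl(A,B^\Delta)$ so that, by the cube description of iterated loop spaces in section \ref{subsec:loop}, an element of $\pi_n$ is represented by a pointed simplicial map $(I^n,\partial I^n)\to(\Ex^\infty\Hom_\Algl(A,B^\Delta),*)$. Since $I^n$ is finite, any such representative factors through some $\Ex^r\Hom_\Algl(A,B^\Delta)$; the adjunction $\sd^r\dashv\Ex^r$ together with the simplicial enrichment adjunction $\Hom_\S(X,\Hom_\Algl(A,B^\Delta))\cong\Hom_\Algl(A,B^X)$ turns this into an $\ell$-algebra homomorphism $A\to B^{\sd^rI^n}$ vanishing on $\sd^r\partial I^n$, i.e.\ a morphism $A\to B^{\fS_n}_r$. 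I would define $\Phi$ by sending the homotopy class of such a representative to its class in $[A,B^{\fS_n}_\bul]$.

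For well-definedness and surjectivity I would invoke Lemma \ref{lem:haupt2}. Two representatives $f_0,f_1:A\to B^{\fS_n}_r$ produce the same class in $\pi_n$ precisely when, after enlarging $r$ to some $r'\geq r$, they are connected by a based simplicial homotopy in $\Ex^{r'}\Hom_\Algl(A,B^\Delta)$. Under the adjunctions above such a homotopy is the same datum as a morphism $A\to\widetilde{B}^{\fS_n}_{r'}$ whose composites with the two coface restrictions $(d^i)^*$ recover the images of $f_0,f_1$ under the structural map $B^{\fS_n}_r\to B^{\fS_n}_{r'}$. Lemma \ref{lem:haupt2} then guarantees that those images are polynomially homotopic, so $\Phi$ is well defined. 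Surjectivity is essentially the inverse of the construction: given $g:A\to B^{\fS_n}_r$, the adjoint pointed map $(\sd^rI^n,\sd^r\partial I^n)\to(\Hom_\Algl(A,B^\Delta),*)$ transposes to a pointed map $(I^n,\partial I^n)\to(\Ex^r\Hom_\Algl(A,B^\Delta),*)$ which, composed with $\Ex^r\to\Ex^\infty$, represents a class in $\pi_n$ mapping back to $[g]$.

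Injectivity is where Lemma \ref{lem:haupt} does the real work. Suppose $f_0,f_1:A\to B^{\fS_n}_r$ satisfy $[f_0]=[f_1]$ in $[A,B^{\fS_n}_\bul]$; after replacing $r$ by some $r'\geq r$ and concatenating elementary polynomial homotopies we may assume there is a single $H:A\to(B^{\fS_n}_{r'})^{\sd^sI}$ with $(d^i)^*H=f_i$. Applying Lemma \ref{lem:haupt} to $H$ produces $\widetilde{H}:A\to\widetilde{B}^{\fS_n}_{r'+s}$ whose faces $(d^i)^*\widetilde H$ are the images of $f_0,f_1$ under the last-vertex map $B^{\fS_n}_{r'}\to B^{\fS_n}_{r'+s}$. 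Unwinding the adjunctions once more, $\widetilde H$ is precisely a based simplicial homotopy $I^n\times I\to\Ex^{r'+s}\Hom_\Algl(A,B^\Delta)$ that sends $\partial I^n\times I$ to the basepoint and interpolates between the cube representatives attached to $f_0$ and $f_1$, so the two classes agree in $\pi_n\Hom_\Algl(A,B^\Delta)$.

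Naturality in $A$ and $B$ will follow at once from the functoriality of $\Ex^\infty$, of the subdivision adjunction, and of the multiplication morphisms of section \ref{sec:multi}. The main obstacle I anticipate is the careful bookkeeping of the various adjunctions involved---subdivision/$\Ex$, the simplicial enrichment, and the passage to the ind-colimits defining both $[A,B^{\fS_n}_\bul]$ and $\pi_n$ of a non-fibrant simplicial set---to verify that the simplicial boundary condition $\partial I^n\mapsto *$ matches exactly the vanishing condition defining $B^{\fS_n}_r$, and that the homotopy extracted from $\widetilde H$ in the injectivity step is genuinely based in all $I^n$-coordinates simultaneously, rather than only along the homotopy direction.
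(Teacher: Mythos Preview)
Your proposal is correct and follows essentially the same approach as the paper: pass to $\Ex^\infty$, use the cube model of $\Omega^n$ together with the adjunctions $\sd^r\dashv\Ex^r$ and $\Hom_\S(X,\Hom_\Algl(A,B^\Delta))\cong\Hom_\Algl(A,B^X)$ to identify $0$- and $1$-simplices of $\Omega^n\Ex^\infty\Hom_\Algl(A,B^\Delta)$ with $\colim_r\Hom_\Algl(A,B^{\fS_n}_r)$ and $\colim_r\Hom_\Algl(A,\widetilde{B}^{\fS_n}_r)$ respectively, then invoke Lemma~\ref{lem:haupt2} for well-definedness and Lemma~\ref{lem:haupt} for injectivity. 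The only cosmetic difference is that the paper first writes down the surjection from $0$-simplices and then checks it descends to $\pi_0$, whereas you define $\Phi$ on $\pi_n$ and verify it is well defined; these are the same argument in different order.
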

\begin{proof}
	We will show that $\pi_0\Omega^n\Ex^\infty\Hom_\Algl(A,B^\Delta)\cong [A,B^{\fS_n}_\bul]$. Consider $\Hom_\Algl(A,B^\Delta)$ as a simplicial set pointed at the zero morphism. For every $p\geq 0$ we have a pullback of sets:
	\begin{equation}\label{eq:pbzero}\begin{gathered}\xymatrix{\left(\Omega^n\Ex^\infty\Hom_\Algl(A,B^\Delta)\right)_p\ar[d]\ar[r] & \Map\left(I^n,\Ex^\infty\Hom_\Algl(A,B^\Delta)\right)_p\ar[d] \\
		\ast\ar[r] & \Map\left(\partial I^n,\Ex^\infty\Hom_\Algl(A,B^\Delta)\right)_p \\}\end{gathered}\end{equation}
	For a finite simplicial set $K$ we have:
	\begin{align*}\Map\left(K,\Ex^\infty\Hom_\Algl(A,B^\Delta)\right)_p&=\Hom_\S\left(K\times \Delta^p,\Ex^\infty\Hom_\Algl(A,B^\Delta)\right)\\
	&\cong\colim_r\Hom_\S\left(K\times\Delta^p,\Ex^r\Hom_\Algl(A,B^\Delta)\right)\\
	&\cong\colim_r\Hom_\S\left(\sd^r(K\times\Delta^p),\Hom_\Algl(A,B^\Delta)\right)\\
	&\cong\colim_r\Hom_\Algl\left(A,B^{\sd^r(K\times\Delta^p)}\right)\end{align*}
	It follows from these identifications, from \eqref{eq:pbzero} and from the fact that filtered colimits of sets commute with finite limits, that we have the following bijections:
	\begin{equation}\label{eq:bij0}\left(\Omega^n\Ex^\infty\Hom_\Algl(A,B^\Delta)\right)_0\cong \colim_r\Hom_\Algl(A,B^{\fS_n}_r)\end{equation}
	\begin{equation}\label{eq:bij1}\left(\Omega^n\Ex^\infty\Hom_\Algl(A,B^\Delta)\right)_1\cong \colim_r\Hom_\Algl(A,\widetilde{B}^{\fS_n}_r)\end{equation}
	Using \eqref{eq:bij0} we get a surjection:
	\[\left(\Omega^n\Ex^\infty\Hom_\Algl(A,B^\Delta)\right)_0\cong \colim_r\Hom_\Algl(A,B^{\fS_n}_r)\longrightarrow [A,B^{\fS_n}_\bul]\]
	We claim that this function induces the desired bijection. The fact that it factors through $\pi_0$ follows from the identification \eqref{eq:bij1} and Lemma \ref{lem:haupt2}. The injectivity of the induced function from $\pi_0$ follows from Lemma \ref{lem:haupt}.
\end{proof}

\begin{rem}\label{rem:groupstructure}
	Let $A$ and $B$ be two $\ell$-algebras and let $n\geq 1$. Endow the set $[A,B^{\fS_n}_\bul]$ with the group structure for which \eqref{eq:bij} is a group isomorphism. This group structure is abelian if $n\geq 2$. Moreover, if $f:A\to A'$ and $g:B\to B'$ are morphisms in $[\Algl]$, then the following functions are group homomorphisms:
	\[f^*:[A', B^{\fS_n}_\bul]\to [A, B^{\fS_n}_\bul]\]
	\[g_*:[A, B^{\fS_n}_\bul]\to [A, (B')^{\fS_n}_\bul]\]
\end{rem}

\begin{exa}\label{exa:groupinverse}
	Recall that $B^{\Delta^1}=B[t_0,t_1]/\langle 1-t_0-t_1\rangle$. Let $\omega$ be the automorphism of $B^{\Delta^1}$ defined by $\omega(t_0)=t_1$, $\omega(t_1)=t_0$; it is clear that $\omega$ induces an automorphism of $B^{\fS_1}_0=\ker(B^{\Delta^1}\to B^{\partial\Delta^1})$. Let $f:A\to B^{\fS_1}_0$ be an $\ell$-algebra homomorphism and let $[f]$ be its class in $[A,B^{\fS_1}_\bul]$. We claim that $[\omega\circ f]=[f]^{-1}\in[A,B^{\fS_1}_\bul]$. In order to prove this claim, we proceed to recall the definition of the group law $*$ in $\pi_1\Ex^\infty\Hom_\Algl(A,B^\Delta)$. Consider $f$ and $\omega\circ f$ as $1$-simplices of $\Ex^\infty\Hom_\Algl(A,B^\Delta)$ using the identification:
	\[\left(\Ex^\infty\Hom_\Algl(A,B^\Delta)\right)_1\cong\colim_r\Hom_\Algl(A,B^{\sd^r\Delta^1})\]
	According to \cite{goja}*{Section I.7}, if we find $\alpha\in\left(\Ex^\infty\Hom_\Algl(A,B^\Delta)\right)_2$ such that
	\begin{equation}\label{eq:condi}\left\{\begin{array}{l}d_0\alpha=\omega\circ f\\
	d_2\alpha=f\end{array}\right.\end{equation}
	then we have $[f]*[\omega\circ f]=[d_1\alpha]$. Let $\varphi:B^{\Delta^1}\to B^{\Delta^2}$ be the $\ell$-algebra homomorphism defined by $\varphi(t_0)=t_0+t_2$, $\varphi(t_1)=t_1$. Let $\alpha$ be the $2$-simplex of $\Ex^\infty\Hom_\Algl(A,B^\Delta)$ induced by the composite:
	\[A\overset{f}\to B^{\Delta^1}\overset{\varphi}\to B^{\Delta^2}\]
	It is easy to verify that the equations \eqref{eq:condi} hold and that $d_1\alpha$ is the zero path.
\end{exa}

\begin{exa}\label{exa:cmn}
	Let $A$ and $B$ be two $\ell$-algebras and let $m,n\geq 1$. Let $c:I^m\times I^n\overset{\cong}\to I^n\times I^m$ be the commutativity isomorphism; $c$ induces an isomorphism $c^*:B^{\fS_{n+m}}_\bul\to B^{\fS_{m+n}}_\bul$. We claim that the following function is multiplication by $(-1)^{mn}$:
	\[c^*:[A,B^{\fS_{n+m}}_\bul]\to [A,B^{\fS_{m+n}}_\bul]\]
	Indeed, this follows from Theorem \ref{thm:bij} and the well known fact that permuting two coordinates in $\Omega^{m+n}$ induces multiplication by $(-1)$ upon taking $\pi_0$.
\end{exa}

\section{Garkusha's Comparison Theorem A}\label{sec:spec}

Matrix-unstable algebraic $KK$-theory consists of a triangulated category $\DRF$ endowed with a functor $j:\Algl\to \DRF$ that satisfies (H) and (E), and is moreover universal with respect to these two properties. It was constructed by Garkusha in \cite{garkuuni}*{Section 2.4} by means of deriving a certain Brown category and then stabilizing the loop functor. Garkusha defined in \cite{garku} a space $\scrK(A,B)$ such that $\pi_0\scrK(A,B)\cong\Hom_\DRF(j(A),j(B))$, for any pair of $\ell$-algebras $A$ and $B$. In this section we apply Theorem \ref{thm:main} to give a simplified proof of \cite{garku}*{Comparison Theorem A}, where $\pi_0\scrK(A,B)$ is computed in terms of polynomial homotopy classes of morphisms.

\subsection{Extensions and classifying maps}\label{subsec:extensions}

Let $\Mod_\ell$ be the category of $\ell$-modules and write $F:\Algl\to \Mod_\ell$ for the forgetful functor. An \emph{extension} of $\ell$-algebras is a diagram
\begin{equation}\label{eq:extension}\scrE:\xymatrix{A\ar[r] & B\ar[r] & C\\}\end{equation}
in $\Algl$ that becomes a split short exact sequence upon applying $F$. A \emph{morphism of extensions} is a morphism of diagrams in $\Algl$. We often consider specific splittings for the extensions we work with and we sometimes write $(\scrE,s)$ to emphasize that we are considering an extension $\scrE$ with splitting $s$. Let $(\scrE,s)$ and $(\scrE',s')$ be two extensions with specified splittings; a \emph{strong morphism of extensions} $(\scrE',s')\to (\scrE,s)$ is a morphism of extensions $(\alpha,\beta,\gamma):\scrE'\to \scrE$ that is compatible with the splittings; i.e. such that the folowing diagram commutes:
\[\xymatrixrowsep{2em}\xymatrix{FB'\ar[d]_{F\beta} & FC'\ar[l]_-{s'}\ar[d]^-{F\gamma} \\
	FB & FC\ar[l]_-{s} \\}\]

The functor $F:\Algl\to\Mod_\ell$ admits a right adjoint $\tilT:\Mod_\ell\to\Algl$; see \cite{garku}*{Section 3} for details. Let $T$ be the composite functor $\tilT\circ F:\Algl\to\Algl$. Let $A\in\Algl$ and let $\eta_A:TA\to A$ be the counit of the adjunction. Notice that $F\eta_A$ is a retraction which has the unit map $\sigma_A:FA\to F\tilT(FA)=FTA$ as a section. Let $JA:=\ker \eta_A$. The \emph{universal extension} of $A$ is the extension:
\[\scrU_A:\xymatrix{JA\ar[r] & TA\ar[r]^-{\eta_A} & A\\}\]
We will always consider $\sigma_A$ as a splitting for $\scrU_A$.

\begin{prop}[cf. \cite{cortho}*{Proposition 4.4.1}]\label{lem:classexists}
	Let \eqref{eq:extension} be an extension with splitting $s$ and let $f:D\to C$ be a morphism in $\Algl$. Then there exists a unique strong morphism of extensions $\scrU_{D}\to(\scrE,s)$ extending $f$:
	\begin{equation}\label{eq:classexists}\begin{gathered}\xymatrix{\scrU_D\ar@{..>}[d]_-{\exists !} & JD\ar[r]\ar@{..>}[d]_-{\xi} & TD\ar@{..>}[d]\ar[r]^-{\eta_D} & D\ar[d]^-{f} \\
		(\scrE,s) & A\ar[r] & B\ar[r] & C \\}\end{gathered}\end{equation}
\end{prop}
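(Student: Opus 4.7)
The plan is to construct the middle map $TD\to B$ directly from the universal property of $T=\tilT\circ F$ as right adjoint to the forgetful functor $F:\Algl\to\Mod_\ell$, and then use the adjunction both to verify commutativity and to deduce uniqueness.

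First I would define the candidate morphism $\varphi:TD\to B$. Since the splitting $s:FC\to FB$ is a map of $\ell$-modules, so is the composite $s\circ Ff:FD\to FB$. The adjunction $F\dashv\tilT$ yields a unique morphism $\varphi:TD\to B$ in $\Algl$ such that $F\varphi\circ\sigma_D=s\circ Ff$; this is precisely the strong-morphism condition on the middle square. Next I would check that the right-hand square of \eqref{eq:classexists} commutes, i.e. that the composite $TD\xrightarrow{\varphi}B\twoheadrightarrow C$ equals $f\circ\eta_D$. By the adjunction, two morphisms $TD\to C$ agree iff their images under $F$ agree after precomposition with $\sigma_D$. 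For our two candidates we compute
\[
F(B\twoheadrightarrow C)\circ F\varphi\circ\sigma_D=F(B\twoheadrightarrow C)\circ s\circ Ff=Ff,
\]
using that $s$ is a section of $F(B\twoheadrightarrow C)$; and
\[
F(f\circ\eta_D)\circ\sigma_D=Ff\circ F\eta_D\circ\sigma_D=Ff,
\]
using that $\sigma_D$ is a section of $F\eta_D$. So the right square commutes.

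Once the right square commutes, $\varphi$ sends $JD=\ker\eta_D$ into $\ker(B\twoheadrightarrow C)=A$, producing the required $\xi:JD\to A$ as the unique restriction of $\varphi$. This completes the existence part.

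For uniqueness, suppose $(\xi',\varphi',f):\scrU_D\to(\scrE,s)$ is any strong morphism of extensions extending $f$. The strong-morphism condition forces $F\varphi'\circ\sigma_D=s\circ Ff$, and the adjunction says there is a unique $\varphi':TD\to B$ in $\Algl$ with this property; hence $\varphi'=\varphi$, and then $\xi'$ is determined as its restriction to kernels. I do not expect any real obstacle here: once one sets things up via the adjunction the existence, commutativity and uniqueness all become formal. The only point that needs any care is the verification that the right square commutes, and that reduces immediately to the two identities $s$ is a section of $F(B\twoheadrightarrow C)$ and $\sigma_D$ is a section of $F\eta_D$.
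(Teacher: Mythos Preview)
Your argument is correct and is precisely the unpacking of the paper's one-line proof (``It follows easily from the adjointness of $\tilT$ and $F$''): you use the universal property of $TD=\tilT(FD)$ to produce $\varphi$, verify the right square via the two section identities, and then restrict to kernels; uniqueness drops out of the same universal property. One small notational quibble: you write the adjunction as $F\dashv\tilT$, matching the paper's phrasing, but the universal property you invoke (an $\ell$-module map $FD\to FB$ corresponds to a unique algebra map $TD\to B$ with $F\varphi\circ\sigma_D$ equal to the given map) is that of $\tilT$ as \emph{left} adjoint to $F$; this is indeed how the tensor algebra behaves, and the counit $\eta_A:TA\to A$ and unit $\sigma_A:FA\to FTA$ described in the paper confirm it, so the paper's ``right adjoint'' is a slip you have inherited only in notation, not in substance.
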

\begin{proof}
	It follows easily from the adjointness of $\tilT$ and $F$.
\end{proof}

The morphism $\xi$ in \eqref{eq:classexists} is called the \emph{classifying map of $f$ with respect to the extension $(\scrE,s)$}. When $D=C$ and $f=\id_C$ we call $\xi$ the \emph{classifying map of $(\scrE,s)$}.

\begin{prop}[cf. \cite{cortho}*{Proposition 4.4.1}]\label{lem:classhmtp}
	In the hypothesis of Proposition \ref{lem:classexists}, the homotopy class of the classifying map $\xi$ does not depend upon the splitting $s$.
\end{prop}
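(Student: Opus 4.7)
The plan is to interpolate between the two splittings using a polynomial homotopy and appeal to the uniqueness in Proposition \ref{lem:classexists}. Concretely, let $s, s' : FC \to FB$ be two $\ell$-module splittings of the extension $\scrE : A \to B \to C$, and write $\xi, \xi' : JD \to A$ for the corresponding classifying maps of $f : D \to C$.

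First I would form the extension $\scrE[t] : A[t] \to B[t] \to C[t]$ obtained by tensoring $\scrE$ with $\Z[t]$; since $\Z[t]$ is flat over $\Z$, this remains short exact, and it is split as a sequence of $\ell$-modules. Next I would produce an $\ell$-module splitting $s_t : FC[t] \to FB[t]$ of $\scrE[t]$ interpolating between $s$ and $s'$, namely the unique $\ell[t]$-linear extension of $c \mapsto s(c)\otimes(1-t) + s'(c)\otimes t$. A direct check shows $s_t$ is a section of the projection $B[t] \to C[t]$, and that under the evaluations $\ev_0, \ev_1 : \ell[t] \to \ell$ the splitting $s_t$ pulls back to $s$ and $s'$ respectively, so $\ev_i$ induces strong morphisms of extensions $(\scrE[t], s_t) \to (\scrE,s)$ and $(\scrE[t],s_t) \to (\scrE,s')$.

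Now I would apply Proposition \ref{lem:classexists} to the composite $D \xrightarrow{f} C \hookrightarrow C[t]$ and the extension $(\scrE[t], s_t)$, obtaining a strong morphism of extensions $\scrU_D \to (\scrE[t],s_t)$ whose restriction to the kernels is a classifying map $H : JD \to A[t]$. Composing with $\ev_0$ (respectively $\ev_1$) yields a strong morphism $\scrU_D \to (\scrE,s)$ (respectively $(\scrE,s')$) extending $f$; by the uniqueness in Proposition \ref{lem:classexists} these composites must equal $\xi$ and $\xi'$. Thus $H$ is an elementary polynomial homotopy from $\xi$ to $\xi'$, proving $\xi \sim \xi'$.

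The only potentially delicate point is verifying that $s_t$ is genuinely a section on the nose rather than only modulo something — but this is immediate from the definition since $\pi[t] \circ s_t(c) = c\otimes(1-t) + c\otimes t = c\otimes 1$. The remainder is just the universal property of $\tilT \dashv F$ packaged through Proposition \ref{lem:classexists}, so I expect no substantial obstacle; the whole argument is essentially a mild strengthening of the uniqueness statement there.
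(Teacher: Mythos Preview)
Your argument is correct and is precisely the standard interpolation proof: the paper itself gives no argument here, merely citing \cite{garku}*{Section 3} (and implicitly \cite{cortho}*{Proposition 4.4.1}), where exactly this linear interpolation $s_t = (1-t)s + ts'$ followed by an appeal to the uniqueness in Proposition~\ref{lem:classexists} is carried out. The only cosmetic point is that strong morphisms of extensions compose, so the composite $\scrU_D\to(\scrE[t],s_t)\to(\scrE,s)$ is again strong and hence agrees with the canonical one by uniqueness---you use this implicitly and it is immediate from the definitions.
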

\begin{proof}See, for example, \cite{garku}*{Section 3}.\end{proof}

Because of Proposition \ref{lem:classhmtp}, it makes sense to speak of the classifying map of \eqref{eq:extension} as a homotopy class $JC\to A$ without specifying a splitting for \eqref{eq:extension}.

\begin{lem}\label{lem:Jhtp}
	The functor $J:\Algl\to \Algl$ sends homotopic morphisms to homotopic morphisms. Thus, it defines a functor $J:[\Algl]\to[\Algl]$.
\end{lem}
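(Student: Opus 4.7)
The plan is to show that $J$ preserves elementary homotopy, which suffices since $\sim$ is the transitive closure of $\sim_e$. So let $f_0,f_1:A\to B$ be elementary homotopic via $h:A\to B^{\Delta^1}=B[t]$, i.e. $\ev_i\circ h = f_i$ for $i=0,1$; I want to produce an elementary homotopy $H:JA\to (JB)^{\Delta^1}$ between $J(f_0)$ and $J(f_1)$.

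The key construction is to tensor the universal extension $\scrU_B:JB\to TB\to B$ pointwise with $\Z[t]$. Since $\Z[t]$ is $\Z$-flat (in fact free), the resulting sequence
\[\xymatrix{JB\otimes\Z[t]\ar[r] & TB\otimes\Z[t]\ar[r] & B\otimes\Z[t]}\]
is again an extension, with splitting $\sigma_B\otimes\id$ inherited from the splitting $\sigma_B$ of $\scrU_B$. Using the natural isomorphism of Lemma~\ref{lem:simppair} we identify this with an extension $(JB)^{\Delta^1}\to (TB)^{\Delta^1}\to B^{\Delta^1}$. Proposition~\ref{lem:classexists} applied to $h:A\to B^{\Delta^1}$ and this extension yields a unique strong morphism $\scrU_A\to\scrU_B\otimes\Z[t]$ extending $h$; restricting to kernels gives the desired $H:JA\to (JB)^{\Delta^1}$.

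It remains to verify that $\ev_i\circ H = J(f_i)$. For this, observe that $\ev_i:\scrU_B\otimes\Z[t]\to\scrU_B$ is itself a strong morphism of extensions (with respect to the splittings $\sigma_B\otimes\id$ and $\sigma_B$), because $\ev_i$ acts pointwise and is compatible with $\sigma_B$ on underlying $\ell$-modules. Composing, the morphism $\ev_i\circ(-):\scrU_A\to\scrU_B$ is a strong morphism of extensions that extends $\ev_i\circ h = f_i$. By the uniqueness clause of Proposition~\ref{lem:classexists}, this composite must coincide with the classifying strong morphism extending $f_i$, whose restriction to kernels is by definition $J(f_i)$. Hence $\ev_i\circ H = J(f_i)$, so $H$ is an elementary homotopy between $J(f_0)$ and $J(f_1)$.

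The only subtle point is the bookkeeping in the final paragraph: one must use the \emph{strong} version of the universal property (with the specific splittings $\sigma_A$, $\sigma_B$, and $\sigma_B\otimes\id$) rather than just an abstract classifying-map statement, in order for the uniqueness argument comparing $\ev_i\circ H$ with $J(f_i)$ to go through. Once this is in hand, the induced functor $J:[\Algl]\to[\Algl]$ is immediate, completing the proof.
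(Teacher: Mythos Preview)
Your proof is correct. The paper itself does not give an argument here; it simply refers to the discussion following \cite{cortho}*{Corollary 4.4.4}. What you have written is essentially the standard unpacking of that reference: one tensors the universal extension with $\Z[t]$, applies the universal property of $\scrU_A$ to the homotopy $h$, and then uses the uniqueness clause of Proposition~\ref{lem:classexists} together with the fact that $\ev_i$ is a strong morphism of extensions to identify the endpoints with $J(f_i)$. The only cosmetic difference from the usual presentation is that the argument in \cite{cortho} is often phrased by first constructing a natural map $J(B[t])\to (JB)[t]$ (the classifying map of the tensored extension with $h=\id_{B[t]}$) and then precomposing with $J(h)$; your version absorbs this into a single application of Proposition~\ref{lem:classexists}, which by the factorization $\xi=\xi_{\id}\circ J(h)$ (the same identity as \eqref{eq:LambdaJ}) amounts to the same thing.
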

\begin{proof}
	It is explained in \cite{cortho} in the discussion following \cite{cortho}*{Corollary 4.4.4.}.
\end{proof}

\subsection{Path extensions}\label{subsec:pathext} Let $B$ be an $\ell$-algebra and let $n,q\geq 0$. Put:
\[P(n,B)^q_\bul:=B^{(I^{n+1}\times\Delta^q,(I^n\times\{1\}\times\Delta^q)\cup(\partial I^n\times I\times\Delta^q))}_\bul\]
On the one hand, the composite $I^n\times\Delta^q\cong I^n\times\{0\}\times\Delta^q\subseteq I^{n+1}\times\Delta^q$ induces morphisms $p_{n,B}^q:P(n,B)^q_r\to B^{(I^n\times\Delta^q,\partial I^n\times\Delta^q)}_r$. On the other hand, we have inclusions $B^{(I^{n+1}\times\Delta^q,\partial I^{n+1}\times\Delta^q)}_r\subseteq P(n,B)^q_r$. We claim that the following diagram is an extension:
\[\scrP_{n,B}^q:\xymatrix{B^{(I^{n+1}\times\Delta^q,\partial I^{n+1}\times\Delta^q)}_r\ar[r]^-{\mathrm{incl}} & P(n,B)^q_r\ar[r]^-{p_{n,B}^q} & B^{(I^n\times\Delta^q,\partial I^n\times\Delta^q)}_r}\]
Exactness at $P(n,B)_r^q$ holds because the functors $B^{\sd^r(-)}:\S\to \Algl^\op$ preserve pushouts and we have:
\[\partial I^{n+1}\times \Delta^q=\left[ (I^n\times \{1\}\times \Delta^q)\cup(\partial I^n\times I\times \Delta^q)\right]\cup (I^n\times\{0\}\times \Delta^q)\]
Exactness at $B^{(I^{n+1}\times\Delta^q,\partial I^{n+1}\times\Delta^q)}_r$ follows from the fact that both this algebra and $P(n,B)_r^q$ are subalgebras of $B^{\sd^r(I^{n+1}\times \Delta^q)}$. A splitting of $p_{n,B}^q$ in the category of $\ell$-modules can be constructed as follows. Consider the element $t_0\in\Z^{\Delta^1}$; $t_0$ is actually in $\Z^{(I,\{1\})}_0$ since $d_0(t_0)=0$. Let $s_{n,B}^q$ be the composite:
\[\xymatrixrowsep{2em}\xymatrix{B^{(I^n\times\Delta^q,\partial I^n\times\Delta^q)}_r\ar@/_5pc/@{..>}[dddr]_-{s_{n,B}^q}\ar[r]^-{?\otimes t_0} & B^{(I^n\times\Delta^q,\partial I^n\times\Delta^q)}_r\otimes \Z^{(I,\{1\})}_0\ar[d]^-{\cong} \\
	& \left(B^{(I^n\times\Delta^q,\partial I^n\times\Delta^q)}_r\right)^{(I,\{1\})}_0\ar[d]^-{\mu} \\
	& B^{(I^n\times\Delta^q\times I,(I^n\times\Delta^q\times\{1\})\cup (\partial I^n\times\Delta^q\times I))}_r\ar[d]^-{\cong} \\
	& B^{(I^n\times I\times\Delta^q,(I^n\times\{1\}\times\Delta^q)\cup (\partial I^n\times I\times\Delta^q))}_r}\]
It is straightforward to check that $s_{n,B}^q$ is a section to $p_{n,B}^q$.

\begin{rem}\label{rem:functorialitiespathBnq}
	It is clear that the extensions $(\scrP^q_{n,B}, s_{n,B}^q)$ are:
	\begin{enumerate}[label=(\roman*)]
		\item natural in $B$ with respect to $\ell$-algebra homomorphisms;
		\item natural in $r$ with respect to the last vertex map;
		\item and natural in $q$ with respect to morphisms of ordinal numbers.
	\end{enumerate}
\end{rem}

\begin{exa}\label{exa:Lambda}
Let $A$ and $B$ be two $\ell$-algebras, let $n\geq 0$ and let $f:A\to B^{\fS_n}_r$ be an $\ell$-algebra homomorphism. By Proposition \ref{lem:classexists}, there exists a unique strong morphism of extensions $\scrU_A\to\scrP_{n,B}^0$ that extends $f$:
	\[\xymatrixcolsep{3em}\xymatrix{\scrU_A\ar@{..>}[d]_-{\exists !} & JA\ar[r]\ar@{..>}[d]_-{\Lambda^n(f)} & TA\ar@{..>}[d]\ar[r] & A\ar[d]^-{f} \\
		\scrP_{n,B}^0 & B^{\fS_{n+1}}_r\ar[r] & P(n,B)_r^0\ar[r] & B^{\fS_n}_r \\}\]
	We will write $\Lambda^n(f)$ for the classifying map of $f$ with respect to $\scrP_{n,B}^0$. Notice that:
\begin{equation}
\label{eq:LambdaJ}\Lambda^n(f)=\Lambda^n(\id_{B^{\fS_n}})\circ J(f)
\end{equation}
	Indeed, this follows from the uniqueness statement in Proposition \ref{lem:classexists} and the fact that the following diagram exhibits a strong morphism of extensions $\scrU_A\to \scrP_{n,B}^0$ that extends $f$:
		\[\xymatrixcolsep{3em}\xymatrix{\scrU_A\ar[d] & JA\ar[d]_-{J(f)}\ar[r] & TA\ar[r]\ar[d]^-{T(f)} & A\ar[d]^-{f} \\
			\scrU_{B^{\fS_n}_r}\ar[d] & J(B^{\fS_n}_r)\ar[d]_-{\Lambda^n(\id_{B^{\fS_n}})}\ar[r] & T(B^{\fS_n}_r)\ar[r]\ar[d] & B^{\fS_n}_r\ar[d]^-{\id} \\
			\scrP_{n,B}^0 & B^{\fS_{n+1}}_r\ar[r] & P(n,B)_r^0\ar[r] & B^{\fS_n}_r \\}\]
By \eqref{eq:LambdaJ} and Lemma \ref{lem:Jhtp}, we can consider $\Lambda^n$ as a function $\Lambda^n:[A,B^{\fS_n}_\bul]\to[JA, B^{\fS_{n+1}}_\bul]$.
\end{exa}

\begin{rem}
	Write $\Born$ for the category of bornological algebras; see \cite{ralf}*{Definition 2.5}. Cuntz-Meyer-Rosenberg constructed in \cite{ralf}*{Section 6.3} a triangulated category $\SigmaHo$ endowed with a functor $\Born\to\SigmaHo$ that is homotopy invariant and excisive in the bornological context, and is moreover universal with respect to these two properties; see \cite{ralf}*{Section 6.7}. The matrix-unstable algebraic $KK$-theory category $\DRF$ is the analogue of $\SigmaHo$ in the algebraic context. Garkusha proved in \cite{garku} that there is an isomorphism
	\begin{equation}\label{eq:idea}
	\Hom_\DRF(j(A),j(B))\cong \colim_n[J^nA, B^{\fS_n}_\bul],
	\end{equation}
	where the transition functions on the right hand side are the $\Lambda^n$ of Example \ref{exa:Lambda}. These functions $\Lambda^n:[J^nA,B^{\fS_n}_\bul]\to[J^{n+1}A, B^{\fS_{n+1}}_\bul]$ are the algebraic analogues of the morphism $\Lambda$ of \cite{ralf}*{Definition 6.23} that is used in \cite{ralf}*{Section 6.3} to define the hom-sets in $\SigmaHo$.
\end{rem}

\subsection{Matrix-unstable algebraic $KK$-theory space}\label{sec:kkspace} 
Let $A$ and $B$ be two $\ell$-algebras and let $n\geq 0$. From the proof of Theorem \ref{thm:bij}, it follows that there is a natural bijection:
\[\left(\Omega^n\Ex^\infty\Hom_\Algl(J^nA,B^\Delta)\right)_q\cong \colim_r\Hom_\Algl\left(J^nA,B^{(I^n\times\Delta^q, \partial I^n\times\Delta^q)}_r\right)\]
Let $f\in\Hom_\Algl(J^nA,B^{(I^n\times\Delta^q, \partial I^n\times\Delta^q)}_r)$ and define $\zeta^n(f)$ as the classifying map of $f$ with respect to the extension $\scrP_{n,B}^q$:
\[\xymatrix{\scrU_{J^nA}\ar@{..>}[d]_-{\exists !} & J^{n+1}A\ar[r]\ar@{..>}[d]_-{\zeta^n(f)} & TJ^nA\ar@{..>}[d]\ar[r] & J^nA\ar[d]^-{f} \\
	\scrP_{n,B}^q & B^{(I^{n+1}\times\Delta^q, \partial I^{n+1}\times\Delta^q)}_r\ar[r] & P(n,B)^q_r\ar[r] & B^{(I^n\times\Delta^q, \partial I^n\times\Delta^q)}_r \\}\]
It follows from Remark \ref{rem:functorialitiespathBnq} that this defines a morphism of simplicial sets:
\begin{equation}\label{eq:zetaene}\zeta^n:\Omega^n\Ex^\infty\Hom_\Algl(J^nA,B^\Delta)\to \Omega^{n+1}\Ex^\infty\Hom_\Algl(J^{n+1}A,B^\Delta)\end{equation}

\begin{defi}[Garkusha]
	Let $A$ and $B$ be two $\ell$-algebras. The \emph{matrix-unstable algebraic $KK$-theory space} of the pair $(A,B)$ is the simplicial set defined by
	\[\scrK(A,B):=\colim_n\Omega^n\Ex^\infty\Hom_\Algl(J^nA,B^\Delta),\]
	where the transition morphisms are the $\zeta^n$ defined in \eqref{eq:zetaene}.
\end{defi}

Note that $\scrK(A,B)$ is a fibrant simplicial set, since it is a filtering colimit of fibrant simplicial sets. This definition of $\scrK(A,B)$ is easily seen to be the same as the one given in \cite{garku}*{Section 4}.

\begin{thm}[\cite{garku}*{Comparison Theorem A}]\label{thm:comparisonthms1}
	For any pair of $\ell$-algebras $A$ and $B$ and any $m\geq 0$, there is a natural isomorphism
	\[\pi_m\scrK(A,B)\cong \colim_v[J^vA, B^{\fS_{m+v}}_\bul]\]
	where the transition functions on the right hand side are the $\Lambda^n$ of Example \ref{exa:Lambda}. 
\end{thm}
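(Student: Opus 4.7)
The strategy is to commute $\pi_m$ with the filtered colimit defining $\scrK(A,B)$, apply Theorem \ref{thm:bij} term-by-term, and identify the induced transition maps with the $\Lambda^{m+n}$ of Example \ref{exa:Lambda}.

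First I would observe that the loop functor $\Omega$ is a finite limit in $\S$ (being defined by the pullback \eqref{eq:Omega1}) and therefore commutes with filtered colimits of simplicial sets. Since $\pi_0$ also commutes with filtered colimits, and $\scrK(A,B)$ is a filtered colimit of pointed fibrant simplicial sets (hence itself pointed fibrant), we get
\[
\pi_m \scrK(A,B) \;\cong\; \pi_0\,\Omega^m \scrK(A,B) \;\cong\; \colim_n \pi_0\,\Omega^{m+n} \Ex^\infty \Hom_\Algl(J^nA, B^\Delta).
\]
Applying the proof of Theorem \ref{thm:bij} to each term (with $A$ replaced by $J^nA$ and $n$ replaced by $m+n$) identifies this with $\colim_n [J^nA, B^{\fS_{m+n}}_\bul]$ as a set.

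Next I would identify the transition functions induced by $\Omega^m\zeta^n$ with the maps $\Lambda^{m+n}$. A $0$-simplex of $\Omega^{m+n} \Ex^\infty \Hom_\Algl(J^nA, B^\Delta)$ is represented by an $\ell$-algebra homomorphism $f: J^nA \to B^{\fS_{m+n}}_r$ for some $r$; viewed via the canonical identification $\Omega^{m+n} \cong \Omega^m \Omega^n$ of section \ref{subsec:loop}, it corresponds to a simplicial map $I^m \to \Omega^n \Ex^\infty \Hom_\Algl(J^nA, B^\Delta)$ sending $\partial I^m$ to the basepoint. The morphism $\Omega^m\zeta^n$ acts by postcomposition with $\zeta^n$; since $\zeta^n$ is defined on $p$-simplices by the classifying-map construction for the path extension $\scrP_{n,B}^p$, and since this family is natural in $p$ (Remark \ref{rem:functorialitiespathBnq}(iii)), the resulting $p$-simplex-wise classifying maps assemble into a strong morphism of extensions $\scrU_{J^nA}\to \scrE$, where $\scrE$ is an extension of $B^{\fS_{m+n}}_r$ with kernel $B^{\fS_{m+n+1}}_r$. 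A direct computation shows that $\scrE$ is strongly isomorphic to $\scrP_{m+n,B}^0$ up to a permutation of the last $m+1$ cube coordinates (moving the ``new'' extension coordinate from position $n{+}1$, where it is inserted by $\zeta^n$, to the rightmost position, where $\scrP_{m+n,B}^0$ places it). The uniqueness statement in Proposition \ref{lem:classexists} then identifies the image of $f$ as the classifying map of $f$ with respect to $\scrP_{m+n,B}^0$, namely $\Lambda^{m+n}(f)$; the sign contributed by the coordinate permutation can be absorbed into the bijection of Theorem \ref{thm:bij} term-by-term, using Example \ref{exa:cmn}, to make the transitions on the nose equal to $\Lambda^{m+n}$.

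The main obstacle will be the bookkeeping in the last step: making the ``assembly over $I^m$'' of the family $\{\scrP_{n,B}^p\}_p$ into a concrete extension and matching it with $\scrP_{m+n,B}^0$, while carefully tracking the cube-coordinate conventions of section \ref{subsec:loop} (where the new loop coordinate is always appended on the right) and verifying that the sign twist introduced by Example \ref{exa:cmn} is coherent across the colimit. Once this is in place, naturality of the resulting bijection in $A$ and $B$ is immediate from the naturality of the classifying maps, the functor $J$, and the multiplication morphisms of section \ref{sec:multi}, and the theorem follows.
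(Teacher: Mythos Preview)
Your approach is essentially the paper's: commute $\pi_m$ with the filtered colimit, apply Theorem \ref{thm:bij} term by term, and reconcile the induced transitions with the $\Lambda$'s via a coordinate permutation. Two small bookkeeping corrections are worth noting. First, in the conventions of section \ref{subsec:loop} one has $\Omega^m\Omega^v=\Omega^{v+m}$ (each new loop coordinate is appended on the \emph{right}), so Theorem \ref{thm:bij} lands you in $[J^vA,B^{\fS_{v+m}}_\bul]$ rather than $[J^vA,B^{\fS_{m+v}}_\bul]$; the paper then uses the block swap $(c_{v,m})^*:B^{\fS_{m+v}}_\bul\to B^{\fS_{v+m}}_\bul$ as the vertical comparison isomorphism and checks directly that the square with $\pi_m\zeta^v$ on top and $\Lambda^{m+v}$ on the bottom commutes. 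Second, no sign argument via Example \ref{exa:cmn} is needed: the $(c_{v,m})^*$ are already level-wise isomorphisms assembling into an isomorphism of $\Zo$-diagrams, which is all that is required to identify the two colimits.
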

\begin{proof}
	Since $\pi_m\cong\pi_0\Omega^m$ commutes with filtered colimits, we have:
	\begin{align*}
	\pi_m\scrK(A,B)&\cong \colim_v\pi_0\Omega^m\Omega^v\Ex^\infty\Hom_\Algl(J^vA,B^\Delta) \\
	&\cong \colim_v\pi_0\Omega^{v+m}\Ex^\infty\Hom_\Algl(J^vA,B^\Delta) \\
	&\cong \colim_v[J^vA,B^{\fS_{v+m}}_\bul] && \text{(by Theorem \ref{thm:bij})}
	\end{align*}
	Notice that $\Omega^m\Omega^v\cong\Omega^{v+m}$ because of our conventions on iterated loop spaces; see section \ref{subsec:loop}. To finish the proof, we need to compare the function $\Lambda^{m+v}$ with:
	\[\pi_m\zeta^v:[J^vA,B^{\fS_{v+m}}_\bul]\to [J^{v+1}A,B^{\fS_{v+1+m}}_\bul]\]
	Let $c_{v,m}:I^v\times I^m\overset{\cong}\to I^m\times I^v$ be the commutativity isomorphism; $c_{v,m}$ induces an isomorphism $(c_{v,m})^*:B^{\fS_{m+v}}_r\to B^{\fS_{v+m}}_r$. It is straightforward to verify that the following squares commute:
	\[\xymatrixcolsep{3em}\xymatrix{[J^vA, B^{\fS_{v+m}}_\bul]\ar[r]^-{\pi_m\zeta^v} & [J^{v+1}A, B^{\fS_{v+1+m}}_\bul] \\
		[J^vA, B^{\fS_{m+v}}_\bul]\ar[r]^-{\Lambda^{m+v}}\ar[u]_-{\cong}^-{(c_{v,m})^*} & [J^{v+1}A, B^{\fS_{m+v+1}}_\bul]\ar[u]^-{\cong}_-{(c_{v+1,m})^*}}\]
	These squares assemble into a morphism of diagrams that, upon taking colimit in $v$, induces the desired isomorphism $\pi_m\scrK(A,B)\cong \colim_v[J^vA, B^{\fS_{m+v}}_\bul]$.
\end{proof}

\begin{rem}
	It is posible to mimic the definition of $\SigmaHo$ in \cite{ralf}*{Section 6.3} to give a new and more explicit construction of Garkusha's matrix-unstable algebraic $KK$-theory category $\DRF$. Indeed, we can take \eqref{eq:idea} as a \emph{definition} of the hom-sets in $\DRF$. Theorem \ref{thm:bij} provides $[J^nA, B^{\fS_n}_\bul]$ with the group structure needed to make sense of the signs that appear when defining the composition rule; see \cite{ralf}*{Lemmas 6.29 and 6.30}. The algebraic context is, however, a little more complicated than the bornological one. We will develop these ideas further in a future paper.
\end{rem}

\begin{biblist}
	
	\bib{cortho}{article}{
		author={Corti{\~n}as, Guillermo},
		author={Thom, Andreas},
		title={Bivariant algebraic $K$-theory},
		journal={J. Reine Angew. Math.},
		volume={610},
		date={2007},
		pages={71--123},
		issn={0075-4102},
		review={\MR{2359851 (2008i:19003)}},
		doi={10.1515/CRELLE.2007.068},
	}
	
	\bib{ralf}{book}{
		author={Cuntz, Joachim},
		author={Meyer, Ralf},
		author={Rosenberg, Jonathan M.},
		title={Topological and bivariant $K$-theory},
		series={Oberwolfach Seminars},
		volume={36},
		publisher={Birkh\"auser Verlag, Basel},
		date={2007},
		pages={xii+262},
		isbn={978-3-7643-8398-5},
		review={\MR{2340673 (2008j:19001)}},
	}

	\bib{garku}{article}{
		author={Garkusha, Grigory},
		title={Algebraic Kasparov $K$-theory. I},
		journal={Doc. Math.},
		volume={19},
		date={2014},
		pages={1207--1269},
		issn={1431-0635},
		review={\MR{3291646}},
	}
	
	\bib{garkuuni}{article}{
		author={Garkusha, Grigory},
		title={Universal bivariant algebraic $K$-theories},
		journal={J. Homotopy Relat. Struct.},
		volume={8},
		date={2013},
		number={1},
		pages={67--116},
		issn={2193-8407},
		review={\MR{3031594}},
		doi={10.1007/s40062-012-0013-4},
	}
	
\bib{gersten}{article}{
	author={Gersten, S. M.},
	title={Homotopy theory of rings},
	journal={J. Algebra},
	volume={19},
	date={1971},
	pages={396--415},
	issn={0021-8693},
	review={\MR{0291253}},
}

	\bib{goja}{book}{
		author={Goerss, Paul G.},
		author={Jardine, John F.},
		title={Simplicial homotopy theory},
		series={Progress in Mathematics},
		volume={174},
		publisher={Birkh\"auser Verlag, Basel},
		date={1999},
		pages={xvi+510},
		isbn={3-7643-6064-X},
		review={\MR{1711612 (2001d:55012)}},
		doi={10.1007/978-3-0348-8707-6},
	}

	\bib{hovey}{book}{
		author={Hovey, Mark},
		title={Model categories},
		series={Mathematical Surveys and Monographs},
		volume={63},
		publisher={American Mathematical Society, Providence, RI},
		date={1999},
		pages={xii+209},
		isbn={0-8218-1359-5},
		review={\MR{1650134}},
	}
	
\bib{quillen}{book}{
	author={Quillen, Daniel G.},
	title={Homotopical algebra},
	series={Lecture Notes in Mathematics, No. 43},
	publisher={Springer-Verlag, Berlin-New York},
	date={1967},
	pages={iv+156 pp. (not consecutively paged)},
	review={\MR{0223432}},
}

\end{biblist}

\end{document}